\documentclass[11pt]{article}

\usepackage{latexsym,mathrsfs}
\usepackage{amsmath,amssymb} 
\usepackage{amsthm,enumerate,verbatim}
\usepackage{amsfonts}
\usepackage{graphicx}
\usepackage{algorithm}
\usepackage{algorithmic}
\usepackage{url}
\usepackage{color}
\usepackage{hyperref}
\usepackage{authblk}

\newcommand {\mat}  [1] {\left[\begin{array}{#1}}
	\newcommand {\rix}      {\end{array}\right]}

\setlength{\textwidth}{170mm}
\setlength{\textheight}{220mm}
\setlength{\topmargin}{-5mm}
\setlength{\oddsidemargin}{-5mm}
\setlength{\evensidemargin}{-5mm}

\newtheorem{lemma}{Lemma}

\newtheorem{theorem}{Theorem}

\newtheorem{definition}{Definition}
\newtheorem*{definition*}{Definition}
\newtheorem{example}{Example}

\newtheorem{remark}{Remark}
\newtheorem*{problem*}{Problem}

\DeclareMathOperator{\Sym}{Sym} 
\DeclareMathOperator{\Skew}{Skew}

\def \R{{\mathbb R}}
\def \C{{\mathbb C}}

\definecolor{brightpink}{rgb}{1.0, 0.0, 0.5}


\title{Computing the nearest $\Omega$-admissible descriptor dissipative Hamiltonian system} 
\date{}

\author{
	Vaishali Aggarwal\thanks{Indian Institute of Technology Delhi, Hauz Khas, 110016 New Delhi, India. Email: maz238696@maths.iitd.ac.in.} 
	\qquad 
	Nicolas Gillis\thanks{University of Mons, Rue de Houdain 9, 7000 Mons, Belgium. Email: nicolas.gillis@umons.ac.be. NG acknowledges the support  by the European Union (ERC consolidator, eLinoR, no 101085607).} \qquad 
	Punit Sharma\thanks{Indian Institute of Technology Delhi, Hauz Khas, 110016 New Delhi, India. Email: punit.sharma@maths.iitd.ac.in. PS acknowledges the support of the SERB- CRG grant (CRG/2023/003221) and SERB-MATRICS grant by Government of India.}
}

	
	
	

		\begin{document}
			
			\maketitle
			
\begin{abstract}
For a given set $\Omega \subseteq \mathbb{C}$, a matrix pair $(E,A)$ is called $\Omega$-admissible if it is regular, impulse-free and its eigenvalues lie inside the region $\Omega$. In this paper, we provide a dissipative Hamiltonian characterization for the matrix pairs that are $\Omega$-admissible where $\Omega$ is an LMI region. We then use these results for solving the nearest $\Omega$-admissible matrix pair problem:  Given a matrix pair $(E,A)$, find the nearest $\Omega$-admissible pair $(\tilde E, \tilde A)$ to the given pair $(E,A)$. We illustrate our results on several data sets and compare with the state of the art. 

The code is available from \url{https://gitlab.com/ngillis/nearest-omega-stable-pair}. 
\end{abstract}

			\textbf{Keywords:} 
$\Omega$-admissibility, linear matrix inequalities, admissible system, dissipative-Hamiltonian system, semidefinite programing
			
\section{Introduction}
In this paper, we study the $\Omega$-admissibility of linear time-invariant \emph{descriptor systems} of the form
\begin{equation}\label{eqa0}
	E\dot{x}(t) = Ax(t) + f(t)
\end{equation}
on the unbounded interval $t \in I=[t_0,\infty)$, where $E,A \in \R^{n,n}$,  and $f$ is a sufficiently smooth function from $I$ to $\R^n$. Systems of the form~\eqref{eqa0} arise from linearization around stationary solutions of the initial value problems (IVPs) for general implicit systems of differential-algebraic equations~\cite{KunM06}. We use the matrix pair $(E,A)$ to represent the descriptor system~\eqref{eqa0}. The system~\eqref{eqa0} is called a \emph{standard system} if $E=I_n$, where $I_n$ is the identity matrix of size $n$. Descriptor systems are known for their complex structures because they contain both finite and infinite poles which may generate undesired impulsive behaviours. Thus, in the study of such systems, regularity and the absence of impulses need to be guaranteed~\cite{KunM06}. 

The system~\eqref{eqa0} is said to be \emph{regular} if the matrix pair $(E,A)$ is regular, that is, $\text{det}(sE-A) \neq 0$ for some $s \in \C$, otherwise it is called \emph{singular}. Regularity ensures that the IVP of solving~\eqref{eqa0} with a consistent initial value $x_0$ has a unique solution.
 For a regular system~\eqref{eqa0}, the roots of the polynomial $\text{det}(sE-A) $ are called the \emph{eigenvalues} of the matrix pair $(E,A)$. A regular matrix pair $(E,A)$ has $\infty$ as an eigenvalue if $E$ is singular. The system~\eqref{eqa0} is \emph{impulse-free} (that is, of index at most one) if $(E,A)$ has $\text{rank}(E)$ number of finite eigenvalues. Impulse-freeness ensures that the impulse response of system is bounded over time. The stability of the system~\eqref{eqa0} is determined by the location of the finite eigenvalues of the matrix pair $(E,A)$. 
A matrix pair $(E,A)$ is said to be \emph{$\Omega$-stable} if it is regular and all its finite eigenvalues lie inside a given region $\Omega$ in the complex plane. 
The admissibility concept of system~\eqref{eqa0} can be investigated by choosing different regions inside which those finite eigenvalues will lie~\cite{Var95,KunM06}.
\begin{definition}[$\Omega$-admissibility]\label{def1a}
	For  $\Omega \subseteq \mathbb{C}$, the matrix pair $(E,A)$, where $E, A\in\mathbb{R}^{n,n}$ is said to be \emph{$\Omega$-admissible} if the pair is regular, impulse-free and the finite eigenvalues of the matrix pair $(E,A)$ lie in the region $\Omega$.
\end{definition}

The stability and performance of the control systems
are closely related to its $\Omega$-admissibility, 
as it is directly linked to the placement of closed-loop poles in a suitable region in the complex plane. It helps in optimizing the system in terms of speed, accuracy, and robustness, and provides flexibility in the system design~\cite{r4,r9,r10,r14,r5,r6}. The step response of a system with eigenvalue $\lambda=-\tau w_n \pm i w_d$ is fully categorized in terms of the undamped natural frequency $w_n=|\lambda|$, the damping ratio $\tau$ and the damped natural frequency $w_d$. By confining $\lambda$ to lie in a prescribed region, specific bounds can be put on these quantities to ensure a satisfactory transient response, see~\cite{TehWE15,KawS83,WanF06} and references therein for standard systems. For example, by restricting the eigenvalues in the intersection of a shifted half plane, a sector and a disk, system limits the maximum overshoot, the frequency of oscillatory modes, the delay time, the rise time, and the settling time~\cite{r10}.

Our main focus in this paper is on the regions of the complex plane which can be expressed using linear matrix inequalities (LMIs). The LMI regions cover a large variety of useful regions, including half-planes, disks, sectors, vertical/horizontal strips, ellipsoid, parabolic regions, hyperbolic regions,  and any intersection thereof. 
This class of regions can be successfully applied in solving some robust control problems based on LMI structures of regions.  LMI approaches are suitable for applications because there are effective  algorithms, such as interior-point methods, for the
solution of LMI problems~\cite{r16}. 
The conditions for $\Omega$-admissibility can often be expressed as LMIs, which are computationally tractable. Due to the importance of the problem, LMI-based (robust) pole clustering characterization for descriptor systems has attracted much
attention in the past years~\cite{r4,r14,r5,r6,r19,r20}. 

The aim of this paper is two fold: (i)~provide a dissipative Hamiltonian (DH) characterization for the set of matrix pairs which are regular, impulse-free, and whose eigenvalues  belong to a given LMI region. This is a generalization of the work in~\cite{ChouGS24}, where a DH characterization  was obtained for matrices with eigenvalues in an LMI region, and (ii)~find the nearest  $\Omega$-admissible matrix pair to a given matrix pair. More precisely, solve the following optimization problem:
\begin{problem*}
	For a given region $\Omega \subseteq \C$ and a matrix pair $(E,A)$, where $E,A \in \R^{n,n}$, find the nearest $\Omega$-admissible matrix pair $(\tilde E, \tilde A)$, that is, solve
\begin{equation}\label{eqa1}
	\inf _{(\tilde{E},\tilde{A}) \in \mathcal{S}_{\Omega}^a}{\|E-\tilde{E}\|}_{F}^{2}+ {\|A-\tilde{A}\|}_{F}^{2}, \tag{$\mathcal P$}
\end{equation}
where $\mathcal{S}_{\Omega}^a$ is the set of all $\Omega$-admissible matrix pairs of size $n \times n$ and ${\|\cdot\|}_{F}$ stands for the Frobenius norm.
\end{problem*}

This problem is closely related to the problem of finding the nearest $\Omega$-stable pairs, which was recently studied in~\cite{noferini2025finding}. In $\Omega$-stability, one aims to find the nearest matrix pair that is regular and has all its eigenvalues inside a given $\Omega$-region. However, it does not guarantee that the computed solution is of index-1 or impulse-free. While in the $\Omega$-admissibility problem~\eqref{eqa1}, the computed solution must be $\Omega$-stable and impulse-free.

In~\cite{noferini2025finding}, authors studied the $\Omega$-stability problem for complex pairs, that is, $(E,A)\in (\C^{n,n})^2$, where they proposed a method based on Riemannian optimization that uses the Schur form of complex pairs. Further, their method works, in theory, for any closed $\Omega$-region in the complex plane; however, in practice, it requires computing a projection on the feasible set that becomes extremely technical and challenging if $\Omega$ is not the unit disk or the left half plane. Our work differs from~\cite{noferini2025finding} in the following ways: 
\begin{itemize}
    \item The problem considered here is for real pairs $(E,A)$.
    \item It provides a DH-parametrization of $\Omega$-admissible pairs, where $\Omega$ is an LMI region.
    \item The $\Omega$-admissibility ensures $\Omega$-stability and impulse-freeness.
    \item DH parametrization leads to a computable projection of matrix pairs on LMI regions.
    \item In some cases, our method provides better results even for the $\Omega$-stability problem than the Riemannian optimization from~\cite{noferini2025finding}.
\end{itemize} 

The $\Omega$-admissibility problem~\eqref{eqa1} is useful in system identification, where a given system must be modified to satisfy performance constraints while keeping it as close as possible to the original system, see~\cite{r1,GilKS20,ChouGS24,r3} for some other related nearness problems. 

This paper is organized as follows: 
Section~\ref{sec:prelim} provides some preliminaries on the admissibility of the descriptor system and on LMI regions. We also define a DH matrix pair, which has the form $(E,A) = (E,(J-R)Q)$ where $J^\top = -J$, $R = R^\top$ and $Q$ is invertible. 
In Section~\ref{sec:DHchara}, we consider LMI regions in the left half of the complex plane. We first give a sufficient condition for a DH pair to be $\Omega$-admissible. We then provide a parametrization for the set of all $\Omega$-admissible pairs using DH pairs when $\Omega$ is a uniform LMI region. The uniform LMI regions are nonempty LMI regions which are cones in the complex plane. 
In Section~\ref{sec:specialomega}, we allow LMI regions to intersect with the right half of the complex plane and provide a class of matrix pairs 
$(E,(J-R)Q)$ with symmetry and semidefiniteness constraints on quadruple  $(E,J,R,Q)$ that are regular and have eigenvalues inside these regions. 
In Section~\ref{sec:omegastable}, we show numerical experiments for Hurwitz and Schur stable pairs, as well as more general LMI regions, and compare our results with the state of the art. 


\paragraph{Notation} In this paper, $I_{n}$ denotes the identity matrix of size $n \times n$. For a symmetric matrix $M$, $M\succ 0$ (M $\prec 0$) and $M\succeq 0$ ($M\preceq 0$), respectively, stand for positive definite (negative definite) and positive semidefinite (negative semidefinite) matrix. We use $\|\cdot\|$ for the spectral norm of a matrix or a vector, and $\Re(\lambda)$ and $\Im(\lambda)$, respectively, denote the real and imaginary parts of a complex number $\lambda$. The complex conjugate transpose of a matrix or a vector $M$ is denoted by $M^{*}$. The  Kronecker product of two matrices $A$ and $B$ is denoted by $A \otimes B$. The standard properties of the Kronecker product are detailed in~\cite{Tis83}.

\section{Preliminaries}		\label{sec:prelim}
In this section, we state results that will be useful in obtaining a parameterization for $\Omega$-admissible matrix pairs with eigenvalues in a prescribed region $\Omega$ in the complex plane. For this, we first define a DH matrix pair.
\begin{definition}[DH matrix pair]\label{defdhpair} 
	A matrix pair $(E,A)$ with $E,A \in \R^{n,n}$ is called a \emph{DH matrix pair}, if $A=(J-R)Q$
for some $J,R,Q \in \R^{n,n}$ such that $J^\top =-J$, $R \succ 0$, and $Q$ invertible with $Q^\top E=E^\top Q \succeq 0$.	
\end{definition}
The matrix $R$ in a DH matrix pair $(E,(J-R)Q)$ is called the \emph{dissipation matrix}. We note that this definition of a DH matrix pair is slightly more restrictive than that of a DH matrix pair in~\cite{r1}, where it is not require that the matrix $R$ is positive definite. 

The following result from~\cite{r1} gives an equivalent condition for a matrix pair to be admissible. 

\begin{theorem}\label{thm:equiasym}
	Let $(E,A)$ be a matrix pair, where $E,A \in \R^{n,n}$. Then the following are equivalent.
	\begin{enumerate}
		\item $(E,A)$ is regular, impulse-free, and asymptotically stable.
		\item $(E,A)$ is a DH matrix pair.
		\item There exists an invertible matrix $P \in \R^{n,n}$ such that 
 \begin{equation*}\label{eq7}
	E^\top  P = P^\top  E \succeq 0 \text{ and } A^{\top}P+P^{\top}A\prec0.
\end{equation*}
	\end{enumerate}
\end{theorem}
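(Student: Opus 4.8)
The plan is to establish the three-way equivalence through the cycle $(1)\Rightarrow(3)\Leftrightarrow(2)\Rightarrow(1)$. The equivalence $(2)\Leftrightarrow(3)$ is purely algebraic and I would handle it first. For $(2)\Rightarrow(3)$, given a DH pair $A=(J-R)Q$ I would simply take $P=Q$: then $E^\top P=Q^\top E\succeq 0$ by definition, and $A^\top P+P^\top A=Q^\top\big((J-R)^\top+(J-R)\big)Q=-2Q^\top R Q\prec 0$ since $R\succ 0$ and $Q$ is invertible. For the converse $(3)\Rightarrow(2)$, given such a $P$ I would set $Q=P$ and $M=AP^{-1}$, and split $M$ into its skew-symmetric and symmetric parts by defining $J=\skewp(M)$ and $R=-\sym(M)$, so that $J-R=M$ and hence $A=(J-R)Q$. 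The only nontrivial point is $R\succ 0$, i.e.\ $M+M^\top\prec 0$; this follows because $M+M^\top=P^{-\top}\big(A^\top P+P^\top A\big)P^{-1}$ is a congruence transform of a negative definite matrix, hence negative definite.

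For $(1)\Rightarrow(3)$ I would invoke the Weierstrass canonical form: there exist invertible $W,T$ with $WET=\diag(I_r,N)$ and $WAT=\diag(A_1,I_{n-r})$, where $r=\rank(E)$ and $N$ is nilpotent. Impulse-freeness forces $N=0$, while asymptotic stability makes $A_1$ Hurwitz, so a classical Lyapunov solution $X_1\succ 0$ with $A_1^\top X_1+X_1A_1\prec 0$ exists. I would then check that $\hat P=\diag(X_1,-I_{n-r})$ certifies $(3)$ for the transformed pair (giving $\hat E^\top\hat P=\diag(X_1,0)\succeq 0$ and $\hat A^\top\hat P+\hat P^\top\hat A=\diag(A_1^\top X_1+X_1A_1,\,-2I_{n-r})\prec 0$) and transport it back through the congruence $P=W^\top\hat P T^{-1}$, which preserves both the symmetry and semidefiniteness of $E^\top P$ and the negative definiteness of $A^\top P+P^\top A$.

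The heart of the argument, and the step I expect to be the main obstacle, is $(2)\Rightarrow(1)$: extracting regularity, impulse-freeness, and stability from the DH structure alone. My first move would be to reduce the pencil. Writing $S=Q^\top E=E^\top Q\succeq 0$, $\tilde J=Q^\top J Q$ (skew-symmetric) and $\tilde R=Q^\top R Q\succ 0$, one has $sE-A=Q^{-\top}\big(sS-(\tilde J-\tilde R)\big)$, so $(E,A)$ is strictly equivalent to $(S,\tilde J-\tilde R)$ via left multiplication by the invertible matrix $Q^{-\top}$; hence regularity, index, and finite spectrum are all preserved and may be studied on the reduced pair. For any $v\neq 0$ and $\lambda$ with $(\lambda S-\tilde J+\tilde R)v=0$, I would form $v^*(\cdot)v$ and use that $v^*\tilde J v$ is purely imaginary while $v^*Sv\ge 0$ and $v^*\tilde R v>0$ are real; taking real parts gives $\Re(\lambda)\,v^*Sv=-v^*\tilde R v<0$. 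This already forces the pencil to be invertible at every $s$ with $\Re(s)\ge 0$ (hence regular), and to have $\Re(\lambda)<0$ at every finite eigenvalue (hence stable).

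The genuinely subtle part is impulse-freeness. Here I expect to need the rank characterization that a regular pair is of index at most one exactly when $V^\top(\tilde J-\tilde R)V$ is invertible, where the columns of $V$ form a basis of $\ker S$ (which coincides with $\ker S^\top$ since $S$ is symmetric, so the left and right null bases may be taken equal). Because $V^\top\tilde J V$ is skew-symmetric and $V^\top\tilde R V\succ 0$, applying the same real-part argument to $V^\top(\tilde J-\tilde R)V z=0$ forces $z=0$, so the matrix is invertible and the index is at most one. Establishing (or carefully citing) this null-space rank criterion for impulse-freeness, and making the pencil reduction fully rigorous, is where the bulk of the care will be required; the spectral and definiteness bookkeeping in the other implications is routine by comparison.
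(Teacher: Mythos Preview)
The paper does not prove this theorem: it is quoted as a known result from reference~\cite{r1} and used as a black box throughout. There is therefore no in-paper argument to compare your outline against.

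Your proposal is nonetheless correct. The algebraic loop $(2)\Leftrightarrow(3)$ is exactly how it is done in~\cite{r1}: take $P=Q$ in one direction, and in the other set $Q=P$, $J=\skewp(AP^{-1})$, $R=-\sym(AP^{-1})$, with $R\succ0$ following from the congruence $AP^{-1}+P^{-\top}A^\top=P^{-\top}(P^\top A+A^\top P)P^{-1}$. Your Weierstrass-plus-Lyapunov construction for $(1)\Rightarrow(3)$ and the transport $P=W^\top\hat P T^{-1}$ are standard and check out. For $(2)\Rightarrow(1)$, the reduction to $(S,\tilde J-\tilde R)$ with $S=Q^\top E$ is a one-sided strict equivalence, so regularity, index and finite spectrum are preserved; the real-part computation then gives regularity and $\Re(\lambda)<0$, and the index-one criterion via invertibility of $V^\top(\tilde J-\tilde R)V$ on $\ker S=\ker S^\top$ is the right tool and your argument for it is valid.

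One cosmetic slip: in the Weierstrass step you write $r=\rank(E)$ before concluding $N=0$. In the general Weierstrass form the first block has size equal to the number of finite eigenvalues (the degree of $\det(sE-A)$), which coincides with $\rank(E)$ only after $N=0$. State the form with a generic first-block size, invoke impulse-freeness to force $N=0$, and then identify that size with $\rank(E)$.
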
		

The following lemma characterizes real and imaginary parts of a finite eigenvalue of a DH matrix pair that will be used in the parametrization of the conic sector regions in Section~\ref{sec:DHchara}.

\begin{lemma}\label{lem2} 
	Let $(E, (J - R)Q) \in (\R^{n,n})^2$ be a regular matrix pair, where $J^\top =-J$, $R^\top =R$, and $Q$ invertible with $Q^\top E=E^\top Q \succeq 0$, and let $\lambda \in \mathbb{C}$ and $x \in \mathbb{C}^{n} \setminus \{0\}$ be such that $x^*(J-R)Q=\lambda x^*E$. Then
\[	
	\Re(\lambda)=-\frac{x^{*} R x}{x^{*} EQ^{-1} x} \quad \text { and } \quad \Im(\lambda)=-i \frac{x^{*} J x}{x^{*} EQ^{-1} x}.
	\]
\end{lemma}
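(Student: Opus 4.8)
The plan is to reduce the whole statement to a single scalar identity, obtained by testing the left-eigenvector relation against $Q^{-1}x$, and then to separate that identity into its real and imaginary parts. First I would right-multiply $x^*(J-R)Q = \lambda x^* E$ by $Q^{-1}x$ to get the scalar equation
\[
x^*(J-R)x \;=\; \lambda\, x^* E Q^{-1} x .
\]
Writing $d := x^* E Q^{-1} x$ and expanding the left-hand side as $x^* J x - x^* R x$, everything will follow once I understand the ``reality type'' of each of the three quantities $d$, $x^* R x$, and $x^* J x$.

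The key structural observation is that $E Q^{-1}$ is real symmetric and positive semidefinite. Indeed, the hypothesis $Q^\top E = E^\top Q$ is precisely the symmetry condition $(E Q^{-1})^\top = E Q^{-1}$, while the congruence identity $E Q^{-1} = (Q^{-1})^\top (Q^\top E) Q^{-1}$ together with $Q^\top E \succeq 0$ gives $E Q^{-1} \succeq 0$. Hence $d = x^* E Q^{-1} x$ is real and nonnegative. Since $R=R^\top$ is real, $x^* R x$ is real; and since $J^\top = -J$ is real, $(x^* J x)^* = x^* J^\top x = -\,x^* J x$, so $x^* J x$ is purely imaginary.

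Before dividing by $d$ I would verify $d \neq 0$, and this is the one point that needs care; I would settle it using regularity. If $d = 0$, then $x^*(E Q^{-1})x = 0$ for the positive semidefinite matrix $E Q^{-1}$ forces $E Q^{-1} x = 0$. Using $E^\top Q = Q^\top E$, this yields $E^\top x = (Q^\top E)Q^{-1}x = Q^\top (E Q^{-1} x) = 0$, hence $x^* E = 0$; substituting back into the eigenvector relation gives $x^*(J-R)Q = \lambda\, x^* E = 0$. Thus $x \neq 0$ would satisfy $x^*(sE - (J-R)Q) = 0$ for every $s \in \C$, contradicting regularity of the pair. Therefore $d > 0$, and the quotient is well defined.

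Finally, dividing the scalar identity by $d$ gives $\lambda = \dfrac{x^* J x}{d} - \dfrac{x^* R x}{d}$, where the second term is real and the first is purely imaginary. Matching real parts yields $\Re(\lambda) = -\,x^* R x / d$, and matching imaginary parts, using $\Im(z) = -i z$ for purely imaginary $z$, yields $\Im(\lambda) = -\,i\, x^* J x / d$, which are exactly the claimed formulas. I expect the main obstacle to be not the algebra but the justification that the denominator is both real (via the congruence argument) and nonzero (via regularity); the remaining steps are routine once these two facts are in place.
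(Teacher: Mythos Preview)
Your proposal is correct and follows essentially the same route as the paper: right-multiply the eigenvector relation by $Q^{-1}x$, use that $EQ^{-1}\succeq 0$ (the paper states this, you supply the congruence argument) to conclude the denominator is real and, via regularity, nonzero, and then separate real and imaginary parts. The only cosmetic difference is that the paper takes the conjugate transpose of the scalar identity and adds/subtracts the two equations, whereas you directly observe that $x^*Rx$ is real and $x^*Jx$ is purely imaginary; these are equivalent ways of reading off $\Re(\lambda)$ and $\Im(\lambda)$.
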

\begin{proof}
	Let $x$ be a left eigenvector of $(E,(J-R)Q)$ corresponding to the eigenvalue $\lambda$, that is, $x$ satisfies \mbox{$x^{*} (J-R)Q=\lambda x^{*}E$}. Since $Q$ is invertible, 
	\begin{equation}\label{eq:lem1eq1}
		x^{*}(J-R)x=\lambda x^{*}E Q^{-1}x,
	\end{equation}
	and by taking the conjugate transpose of~\eqref{eq:lem1eq1}, we get 
		\begin{equation}\label{eq:lem1eq2}
		-x^{*}(J+R)x=\overline \lambda x^{*}E Q^{-1}x,
	\end{equation}
	where we used the fact that $E^\top Q \succeq 0$ implies that $EQ^{-1} \succeq 0$. In view of~\eqref{eq:lem1eq1} and~\eqref{eq:lem1eq2}, we have
	\begin{equation}\label{eq:lem1eq3}
		x^*Rx=-\Re(\lambda)x^*EQ^{-1}x \quad \text{and}\quad 	x^*Jx=i \Im(\lambda)x^*EQ^{-1}x.
	\end{equation}
	Note that $x^*EQ^{-1}x \neq 0$, because if $x^*EQ^{-1}x =0$, then $x^*EQ^{-1}=0$ as $EQ^{-1} \succeq 0$, which implies that $x^*(J-R)=0$. This implies that $(J-R)Q$ and $E$ has a common left null space, and thus $(E,(J-R)Q)$ becomes singular. This is a contradiction to the assumption that $(E,(J-R)Q)$ is regular. 
 Thus from~\eqref{eq:lem1eq3}, we have
\[	
	\Re(\lambda)=-\frac{x^{*} R x}{x^{*} EQ^{-1} x} \quad \text { and } \quad \Im(\lambda)=\frac{x^{*} J x}{i x^{*} EQ^{-1} x}= -i\frac{x^{*} J x}{ x^{*} EQ^{-1} x}.
\] 
\end{proof}

\subsection{LMI regions}\label{subsec:lmi}

In this section, we briefly discuss LMI regions and their properties.
A subset $\Omega \subseteq \C$ is called an LMI region if it can be expressed by linear matrix inequalities (LMIs). The LMI regions, considered in this paper are defined as follows:

\begin{definition*}
	A subset  $\Omega \subseteq \C$ is called an LMI region if there exist real matrices
	 $B \in \R^{s,s}$ and $C \in \R^{s,s}$ such that $B^\top =B$ and 
	\begin{equation}\label{eq:lmidef}
	\Omega=\left\{z \in \C:~f_\Omega(z) \prec 0\right\}, \quad 	\text{where}~ f_\Omega(z)=B+Cz+C^\top \overline z.
	\end{equation}
\end{definition*}
The \emph{characteristic function} $f_\Omega(z)$ can also be written as 
\[
f_\Omega(z)=B+\Sym(C)x+ \Skew(C)iy,
\]
where $x=\Re(z)$, $y =\Im(z)$, $\Sym(C)=C+C^\top$, and $\Skew(C)=C-C^\top$. The characteristic function of an LMI region is not unique~\cite{r10}. 

An LMI region $\Omega$ is called a uniform LMI region if $B=0$, that is, $f_\Omega(z)=Cz+C^\top  \overline{z}$. For example, the conic sector region $\Omega=\left\{z=x+iy \in \C:~x<0, -x \tan(\theta)<y<x \tan(\theta)\right\}$, where $0<\theta < \frac{\pi}{2}$ is a uniform LMI region with the characteristic function 
\[
f_\Omega(z)=\mat{cc}\sin (\theta) & \cos(\theta)\\ -\cos(\theta) & \sin(\theta)\rix z+
\mat{cc}\sin (\theta) & -\cos(\theta)\\ \cos(\theta) & \sin(\theta)\rix \overline{z}.
\]
In fact, any non-empty conic region symmetric to the real axis is a uniform LMI region. We will denote $\Omega_u = \left\{z \in \C:~Cz+C^\top \overline z \prec 0 \right\}$ the uniform part of $\Omega$ (note that $\Omega_u = \Omega$ when $B = 0$). 
LMI regions are convex and symmetric with respect to the real axis. The intersection of two LMI regions is again an LMI region; see Remark~\ref{rem:uniflminter} below. 
LMI regions are dense in the set of convex regions that are symmetric with respect to the real axis. We refer to~\cite{Kus19} and references therein for more detailed properties of LMI regions. 

A large number of regions which are relevant for control systems can be expressed as LMI regions, for example, conic sectors, vertical strips, discs, horizontal strips, ellipses, parabolic regions, hyperbolic sectors and their intersections~\cite{ChilG96,BisPT21}, see Section~\ref{sec:specialomega} for some specific LMI regions.

The $\Omega$-pole placement problem and related problems of matrix or matrix pair $\Omega$-stability with respect to a given LMI region $\Omega$ have appeared in many applications~\cite{r4,r14,r5,r6, r19, r20}. Recently, in~\cite{ChouGS24} and~\cite{noferini2025finding}, the $\Omega$-stable matrix problem has been studied via DH systems and Riemannian optimization, respectively. 

Motivated by~\cite{ChouGS24}, in this paper, we propose a DH parametrization for $\Omega$-admissible matrix pairs, where $\Omega$ is a given LMI region. The following result will be crucial for deriving a DH parametrization for matrix pairs which are regular, impulse free, and have eigenvalues in a uniform LMI region.

\begin{theorem}{\rm \cite{r4}}\label{thm:uniflmi}
	Let $\Omega \subseteq \C$ be an LMI region and $(E,A) \in (\R^{n,n})^2$. Then the following are equivalent. 
	\begin{enumerate}
    
		\item The set $\Omega_u = \left\{z \in \C:~Cz+C^\top \overline z \prec 0 \right\}$ is nonempty, and the matrix pair $(E,A)$ is $\Omega$-admissible. 
        
		\item There exists a matrix $X$ such that
		\begin{equation}\label{eq:unilmi1}
			E^\top X=X^\top E \succeq 0  \quad \text{and}\quad M_{\Omega}(E,A,X):=B \otimes E^\top X+C \otimes X^\top A + C^\top  \otimes A^\top X \prec 0.
		\end{equation}
        
	\end{enumerate}
\end{theorem}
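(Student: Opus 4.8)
The plan is to reduce the pencil statement to the known standard-system case $E=I$ by an equivalence transformation, exploiting that both conditions in~\eqref{eq:unilmi1} are congruence invariant. Concretely, for invertible $W,T\in\R^{n,n}$ and the substitution $(E,A,X)\mapsto(\widehat E,\widehat A,\widehat X)=(WET,WAT,W^{-\top}XT)$, one checks that $\widehat E^\top\widehat X=T^\top E^\top XT$ and $M_\Omega(\widehat E,\widehat A,\widehat X)=(I_s\otimes T)^\top M_\Omega(E,A,X)(I_s\otimes T)$; hence $\widehat E^\top\widehat X=\widehat X^\top\widehat E\succeq0$ and $M_\Omega(\widehat E,\widehat A,\widehat X)\prec0$ hold iff the originals do. This lets me assume $(E,A)$ is in quasi-Weierstrass form whenever it is regular and impulse-free. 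I will also use repeatedly that $\Omega_u\neq\emptyset$ is equivalent to $\Sym(C)$ being sign-definite (seen by restricting $f_\Omega$ to real arguments, where the $\Skew(C)$ term contributes nothing).

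For $(1)\Rightarrow(2)$: since $(E,A)$ is regular and impulse-free, bring it to $\widehat E=\diag(I_r,0)$, $\widehat A=\diag(A_f,I_{n-r})$ with $r=\rank E$, where $A_f$ carries the finite eigenvalues, all in $\Omega$. Applying the classical Chilali–Gahinet characterization (the $E=I_r$ case) to $A_f$ produces a symmetric $X_1\succ0$ with $B\otimes X_1+C\otimes X_1A_f+C^\top\otimes A_f^\top X_1\prec0$. Here the hypothesis $\Omega_u\neq\emptyset$ is used for the infinite block: since $\Sym(C)$ is sign-definite, choosing $X_{22}=\pm I_{n-r}$ with the sign making $\pm\Sym(C)\prec0$ gives $C\otimes X_{22}^\top+C^\top\otimes X_{22}=\pm\Sym(C)\otimes I_{n-r}\prec0$. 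Taking $\widehat X=\diag(X_1,X_{22})$ makes $M_\Omega(\widehat E,\widehat A,\widehat X)$ block diagonal with these two negative-definite blocks, while $\widehat E^\top\widehat X=\diag(X_1,0)\succeq0$; transforming back yields the desired $X$.

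For $(2)\Rightarrow(1)$: first, $X$ is invertible, since $Xv=0$ with $v\neq0$ would give $(u\otimes v)^*M_\Omega(u\otimes v)=0$ for all $u$, contradicting $M_\Omega\prec0$. Regularity and impulse-freeness come from a Lyapunov reduction: compressing $M_\Omega\prec0$ by $w\otimes I_n$ gives $\beta E^\top X+\gamma(A^\top X+X^\top A)\prec0$ with $\beta=w^\top Bw$ and $\gamma=\tfrac12 w^\top\Sym(C)w$; choosing $w$ with $\gamma>0$ and $\beta\ge0$ (allowing a real shift $A\mapsto A-\mu E$, which alters neither regularity nor the index) yields the strict inequality $A^\top X+X^\top A\prec0$ together with $E^\top X=X^\top E\succeq0$ and $X$ invertible, so Theorem~\ref{thm:equiasym} certifies that $(E,A)$ is regular and impulse-free. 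Given regularity, its finite eigenvalues are exactly the $\lambda$ with $Av=\lambda Ev$, $v\neq0$; using $X^\top E=E^\top X$ one computes $v^*X^\top Av=\lambda\,v^*E^\top Xv$ and $v^*A^\top Xv=\bar\lambda\,v^*E^\top Xv$, whence $(u\otimes v)^*M_\Omega(u\otimes v)=(v^*E^\top Xv)\,u^*f_\Omega(\lambda)u$; this forces $v^*E^\top Xv>0$ and $f_\Omega(\lambda)\prec0$, i.e. $\lambda\in\Omega$. Finally $\Omega_u\neq\emptyset$ follows from the infinite modes: when $E$ is singular the principal $(2,2)$ block of the transformed LMI is $C\otimes\widehat X_{22}^\top+C^\top\otimes\widehat X_{22}\prec0$, which forces $\Sym(C)$ sign-definite.

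The purely spectral part is the short Kronecker computation above, and the reduction to quasi-Weierstrass form is routine once congruence invariance is noted. The real work is the descriptor-specific content: certifying regularity and impulse-freeness from the LMI, and correctly locating the role of $\Omega_u\neq\emptyset$ (equivalently, sign-definiteness of $\Sym(C)$) in both handling the infinite block in $(1)\Rightarrow(2)$ and extracting a usable Lyapunov inequality in $(2)\Rightarrow(1)$. I expect the extraction of the strict Lyapunov inequality — selecting the compression direction $w$ and, if necessary, the region shift $\mu$ so that $\gamma>0$ and $\beta\ge0$ — to be the most delicate step.
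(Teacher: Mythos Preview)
The paper does not prove this theorem: it is quoted from \cite{r4} as a preliminary result, with no argument given. Your proposal is therefore original work rather than a reconstruction of something in the paper.

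Your argument for $(1)\Rightarrow(2)$ is correct, as are the spectral and regularity portions of $(2)\Rightarrow(1)$. The congruence invariance under $(E,A,X)\mapsto(WET,WAT,W^{-\top}XT)$, the reduction to quasi-Weierstrass form, the Chilali--Gahinet block for the finite part, the handling of the infinite block via $\pm I_{n-r}$, and the Kronecker computation $(u\otimes v)^*M_\Omega(u\otimes v)=(v^*E^\top Xv)\,u^*f_\Omega(\lambda)u$ all go through. The Lyapunov-extraction step via compression by $w\otimes I_n$ and a real shift also works, though you should note that when $\Sym(C)\prec 0$ every $\gamma$ is negative, so one must pass to $(E,-A)$ (or equivalently flip signs) before invoking Theorem~\ref{thm:equiasym}; you only treat the case $\gamma>0$.

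There is, however, a genuine gap in deducing $\Omega_u\neq\emptyset$ in $(2)\Rightarrow(1)$. Your argument reads off the sign-definiteness of $\Sym(C)$ from the $(2,2)$ block of the transformed LMI, but that block is present only when $E$ is singular. When $E$ is invertible the argument is unavailable, and in fact the conclusion can fail: take $n=s=1$, $B=-1$, $C=0$, $E=A=1$, $X=1$; then $E^\top X=1\succeq0$ and $M_\Omega=B=-1\prec0$, so item~(2) holds, yet $\Omega_u=\{z:0\prec0\}=\emptyset$, so item~(1) fails. Thus the equivalence as literally stated is not valid without an additional hypothesis (e.g.\ $E$ singular, or $\Sym(C)\neq 0$). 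The paper only invokes the direction $(1)\Rightarrow(2)$ (in the proof of Theorem~\ref{thm:iffomega1}), so this does not affect its results, but you cannot close $(2)\Rightarrow(1)$ as the statement stands.
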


\newpage 

\begin{remark}{\rm 
	We note that 
	\begin{itemize}
		\item  The matrix $X$ that satisfies~\eqref{eq:unilmi1} is always invertible. This follows from the fact that $x \in \C^{n}$ satisfying $Xx=0$ implies that $M_{\Omega}(E,A,X) (I \otimes x)=0$. 
		\item A uniform LMI region is nonempty if and only if $\Sym(C)$ is negative or positive definite~\cite{Kus19}. Thus, Theorem~\ref{thm:uniflmi} is useful for LMI regions with negative or positive definite $\Sym(C)$.
	\end{itemize}
	}
\end{remark}

\begin{remark}\label{rem:uniflminter}{\rm
		Let $\hat \Omega$ and $\tilde \Omega$ be two LMI regions with characteristic functions 
		$f_{\hat \Omega}(z)=\hat B+\hat Cz+\hat C^\top \overline z$ and $f_{\tilde \Omega}(z)=\tilde B+\tilde Cz+\tilde C^\top \overline z$, respectively. 
        Then, the intersection $\Omega= \hat \Omega \cap \tilde \Omega $ is also an LMI region with the characteristic function $f_{\Omega}(z)= B+ Cz+ C^\top \overline z$, where 
		\[
		B=\mat{cc} \hat B & 0\\0 & \tilde B \rix \quad \text{and} \quad C=\mat{cc} \hat C & 0\\0 & \tilde C \rix.
		\]
		Thus, in Theorem~\ref{thm:uniflmi}, if $(E,A)$ is $\Omega$-admissible, then there exists an invertible matrix $X \in \R^{n,n}$ such that $E^\top X =X^\top E \succeq 0$, and 
		\[
		\mathcal M_{\hat \Omega}(E,A,X) \prec 0
		\quad \text{and}\quad \mathcal M_{\tilde  \Omega}(E,A,X) \prec 0.%
		\]
	}
\end{remark}

The limitation of Theorem~\ref{thm:uniflmi} lies in its assumption regarding $\Omega_u$. The following theorem is a more general result that is applicable for all LMI regions lying in the left half of the complex plane. 
\begin{theorem}{\rm \cite{r5}}\label{thm:genflmi}
	Let $\Omega \subseteq \C$ be an LMI region that lies in open left half plane and let $(E,A) \in (\R^{n,n})^2$. Then $(E,A)$ is $\Omega$-admissible if and only if there exist $P,S \in \R^{n,n}$ such that 
	\begin{eqnarray*}
	&	(EP)^\top =(EP) \succeq 0, 	
    (ES)^\top =(ES) \succeq 0,	AS+(AS)^\top  \prec 0, \quad \text{and}\\
		&	M_{\Omega}(E,A,P,S):=B \otimes EP+C \otimes AP + C^\top  \otimes (AP)^\top  + I_n \otimes ES \preceq 0.
	\end{eqnarray*}
\end{theorem}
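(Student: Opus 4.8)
The plan is to read the four conditions as two independent certificates and to exploit invariance under strict equivalence. The pair of $S$-conditions, namely $(ES)^\top=ES\succeq0$ and $AS+(AS)^\top\prec0$, already certifies classical admissibility on its own: the strict inequality forces $S$ to be invertible (if $Sx=0$ then $x^*(AS+(AS)^\top)x=0$), and applying Theorem~\ref{thm:equiasym}(3) to the transposed pair $(E^\top,A^\top)$ shows that the existence of an invertible $S$ with $ES=(ES)^\top\succeq0$ and $AS+(AS)^\top\prec0$ is equivalent to $(E,A)$ being regular, impulse-free and asymptotically stable. Since $\Omega$ lies in the open left half-plane, this stability is a necessary part of $\Omega$-admissibility. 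Both the property ``$(E,A)$ is $\Omega$-admissible'' and the property ``there exist $P,S$ satisfying the four conditions'' are invariant under the transformation $(E,A)\mapsto(WET,WAT)$ together with $(P,S)\mapsto(T^{-1}PW^\top,\,T^{-1}SW^\top)$: strict equivalence preserves regularity, finite eigenvalues and the index, while the congruence by $I_s\otimes W^\top$ preserves the semidefiniteness of $M_\Omega$ and of $E^\top$-type terms. Hence I may assume $(E,A)$ is already in Weierstrass form $E=\diag(I_r,0)$, $A=\diag(A_1,I_{n-r})$ with $r=\rank E$, the absence of a nilpotent block reflecting impulse-freeness, and $A_1\in\R^{r,r}$ carrying the finite eigenvalues.

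In this canonical form, symmetry of $EP$ and $ES$ forces the block structure $EP=\diag(\hat P_{11},0)$, $ES=\diag(\hat S_{11},0)$ with $\hat P_{11},\hat S_{11}$ symmetric positive semidefinite, all relevant second-factor matrices become block-diagonal in the finite/infinite split, and the $\C^s\otimes\C^r$ principal submatrix of $M_\Omega$ is exactly the standard (matrix) extended LMI $B\otimes\hat P_{11}+C\otimes A_1\hat P_{11}+C^\top\otimes(A_1\hat P_{11})^\top+I\otimes\hat S_{11}\preceq0$. Restricting $AS+(AS)^\top\prec0$ to the finite block gives $A_1\hat S_{11}+\hat S_{11}A_1^\top\prec0$, whence $\hat S_{11}\succ0$, and a short argument (testing the finite LMI on $w\otimes z$ with $\hat P_{11}z=0$) then gives $\hat P_{11}\succ0$. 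For sufficiency it remains to place the finite eigenvalues, and the key is the congruence identity: for a finite eigenvalue $\lambda$ with eigenvector $y$ (so $APy=\lambda\,EPy$) and any $w\in\C^s$,
\[ (w\otimes y)^* M_\Omega(E,A,P,S)\,(w\otimes y)\;=\;(y^*EPy)\,\bigl(w^* f_\Omega(\lambda)\,w\bigr)\;+\;\|w\|^2\,(y^*ESy), \]
using $EP=(EP)^\top$, the reality of $AP$ (so $y^*(AP)^\top y=\overline{y^*APy}$), and the eigen-relation. Since $M_\Omega\preceq0$, $y^*EPy>0$ and $y^*ESy>0$ on the finite block, this yields $w^*f_\Omega(\lambda)w<0$ for all $w\neq0$, i.e.\ $f_\Omega(\lambda)\prec0$ and $\lambda\in\Omega$.

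For necessity I would run the construction backwards in the canonical form. As $A_1$ is $\Omega$-stable, the matrix Chilali--Gahinet characterization~\cite{ChilG96} supplies $\hat P_{11}\succ0$ with $B\otimes\hat P_{11}+C\otimes A_1\hat P_{11}+C^\top\otimes(A_1\hat P_{11})^\top\prec0$, and Hurwitzness of $A_1$ supplies $\hat S_{11}\succ0$ solving $A_1\hat S_{11}+\hat S_{11}A_1^\top\prec0$. I then set the infinite blocks to $\hat P_{22}=0$, $\hat S_{22}=-I_{n-r}$ with vanishing off-diagonal blocks, so that all four conditions decouple across the finite/infinite split; the infinite block of $M_\Omega$ is identically zero (every term carries an $E$-factor or a $\hat P_{22}$-factor), the $S$-conditions hold blockwise, and replacing $\hat P_{11}$ by $t\hat P_{11}$ for large $t$ makes the finite block $t\,N+I\otimes\hat S_{11}\prec0$ (with $N\prec0$ the Chilali--Gahinet matrix), thereby absorbing the positive slack term. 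Undoing the equivalence via $P=T\hat P W^{-\top}$, $S=T\hat S W^{-\top}$ returns real $P,S$ for the original pair.

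The hardest part is entirely the bookkeeping around the infinite eigenvalues rather than the eigenvalue placement: one must verify that the non-orthogonal congruence by $I_s\otimes W^\top$ preserves semidefiniteness while cleanly decoupling the finite and infinite Kronecker blocks, and that the slack variable $S$ together with the \emph{non-strict} inequality $M_\Omega\preceq0$ exactly compensates for the singularity of $E$ (equivalently, the index-one structure), allowing the infinite block to contribute nothing. Once this is in place the finite block reduces verbatim to the classical matrix result, so the genuinely new content is the reduction and the handling of the singular $E$ (and possibly singular $P$, whose kernel I expect to lie in the infinite directions).
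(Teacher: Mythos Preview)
The paper does not prove Theorem~\ref{thm:genflmi}; it is quoted from~\cite{r5} without proof, so there is no argument in the paper to compare your proposal against.

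That said, your strategy is sound and is the standard route for results of this type: exploit invariance under strict equivalence to pass to real Weierstrass form, let the $S$-certificate handle regularity, impulse-freeness and Hurwitz stability via Theorem~\ref{thm:equiasym} (applied to the transposed pair), and then reduce the $P$-certificate on the finite block to the classical Chilali--Gahinet matrix LMI for $A_1$. The necessity construction with $\hat P_{22}=0$, $\hat S_{22}=-I_{n-r}$ and the scaling $\hat P_{11}\mapsto t\hat P_{11}$ to absorb the slack $I\otimes\hat S_{11}$ is correct. One small imprecision worth fixing: the vector relation $APy=\lambda EPy$ does \emph{not} follow from $y$ being a right eigenvector of $(E,A)$. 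What your displayed identity actually needs is only the scalar relation $y^*APy=\lambda\,y^*EPy$ (and its conjugate), and this holds whenever $y$ is a \emph{left} eigenvector of $(E,A)$: from $y^*A=\lambda y^*E$ one gets $y^*AP=\lambda y^*EP$ directly, and then $y^*(AP)^\top y=\overline{y^*APy}=\bar\lambda\,y^*EPy$ since $EP$ is real symmetric. In Weierstrass form such $y$ has vanishing infinite component, so $y^*EPy=y_1^*\hat P_{11}y_1>0$ and $y^*ESy=y_1^*\hat S_{11}y_1>0$ as you claim, and the conclusion $f_\Omega(\lambda)\prec0$ follows.
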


Again, we note that in Theorem~\ref{thm:genflmi}, the LMI $AS+(AS)^\top  \prec 0$ implies that the matrix $S$ is invertible as for any $x \in \C^n$ satisfying $Sx=0$, we have $(AS+(AS)^\top )x=0$.

\section{DH pairs and their relationship with $\Omega$-admissible pairs $(E,A)$}\label{sec:DHchara}

In this section, we discuss the relationship between DH pairs and $\Omega$-admissibility. The following result gives a condition on a DH pair to be $\Omega$-admissible, when $\Omega$ is an LMI region lying in the left half plane.

\begin{theorem}\label{thm:DHsuff}
Let $\Omega$ be an LMI region in the left half plane given by~\eqref{eq:lmidef} and let $(E,(J-R)Q) \in {(\R^{n,n})}^2$ be a DH matrix pair (Definition~\ref{defdhpair}) such that 
\begin{equation}\label{eq311}
	\mathcal{M}_{\Omega}(E,J,R,Q)= B \otimes Q^\top E + (C-C^\top ) \otimes Q^\top JQ -(C+ C^\top ) \otimes Q^\top RQ \prec 0.
\end{equation}
Then $(E,(J-R)Q) $ is $\Omega$-admissible. 
\end{theorem}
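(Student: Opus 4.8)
The plan is to deduce $\Omega$-admissibility directly from Theorem~\ref{thm:uniflmi} by exhibiting the witness matrix $X$ required in its second condition, for which the natural candidate is $X = Q$, the invertible factor appearing in the DH structure $A = (J-R)Q$. With this choice the two requirements in~\eqref{eq:unilmi1} must be verified: the semidefinite constraint $E^\top X = X^\top E \succeq 0$, and the strict inequality $M_\Omega(E,A,X) \prec 0$. The first is immediate from Definition~\ref{defdhpair}: taking $X = Q$ gives $E^\top X = E^\top Q = Q^\top E = X^\top E$, and this matrix is $\succeq 0$ precisely by the DH assumption $Q^\top E = E^\top Q \succeq 0$.

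For the second requirement I would substitute $A = (J-R)Q$ and $X = Q$ into $M_\Omega$ and expand the two off-diagonal Kronecker terms. Using $J^\top = -J$ and $R^\top = R$, one has $X^\top A = Q^\top J Q - Q^\top R Q$ and $A^\top X = -Q^\top J Q - Q^\top R Q$. Collecting terms by linearity of the Kronecker product in its second argument yields
\[
M_\Omega(E,(J-R)Q,Q) = B \otimes Q^\top E + (C-C^\top) \otimes Q^\top J Q - (C+C^\top) \otimes Q^\top R Q = \mathcal{M}_\Omega(E,J,R,Q).
\]
By hypothesis~\eqref{eq311} this matrix is negative definite, so the second requirement holds as well. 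With both conditions of Theorem~\ref{thm:uniflmi}(2) satisfied, the stated equivalence forces its first condition, which in particular asserts that $(E,(J-R)Q)$ is $\Omega$-admissible (and, as a by-product, that $\Omega_u$ is nonempty, consistent with the existence of a strictly feasible $X$).

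I do not expect a genuine obstacle here beyond bookkeeping: the only content is the algebraic identification of $M_\Omega(E,(J-R)Q,Q)$ with $\mathcal{M}_\Omega(E,J,R,Q)$, a routine Kronecker manipulation relying solely on $J^\top=-J$ and $R^\top=R$. It is worth noting that the positive definiteness $R \succ 0$ from the DH definition is not actually invoked in this implication—only the symmetry $R^\top = R$ enters—nor is the left-half-plane hypothesis on $\Omega$ used explicitly along this route; both are carried along by the DH structure together with~\eqref{eq311}. The result is therefore best read as a sufficient condition packaged through the single matrix variable $X = Q$, rather than through the more flexible free variable allowed in Theorem~\ref{thm:uniflmi}.
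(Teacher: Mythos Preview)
Your argument is correct and takes a genuinely different route from the paper. The paper proves Theorem~\ref{thm:DHsuff} directly: it first invokes Theorem~\ref{thm:equiasym} to conclude that any DH pair with $R\succ 0$ is regular and impulse-free, and then, for each finite eigenvalue $\lambda$ with eigenvector $x$, it manipulates the eigenequation to identify $(B+\lambda C+\bar\lambda C^\top)\otimes x^*Q^\top Ex$ with $(I_n\otimes x)^*\,\mathcal{M}_\Omega(E,J,R,Q)\,(I_n\otimes x)\prec 0$, and finally uses $R\succ 0$ (and $Q$ invertible) to force $x^*Q^\top Ex>0$, hence $\lambda\in\Omega$. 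Your approach instead packages everything into the black-box equivalence of Theorem~\ref{thm:uniflmi}: the choice $X=Q$ turns~\eqref{eq:unilmi1} into exactly~\eqref{eq311}, and admissibility falls out in one stroke. What your route buys is brevity and a sharper reading of the hypotheses---you correctly observe that only $R^\top=R$ and $Q^\top E=E^\top Q\succeq 0$ are used, so neither $R\succ 0$ nor the left-half-plane assumption on $\Omega$ is needed for this direction. What the paper's route buys is self-containment: it does not rely on the external result~\cite{r4} behind Theorem~\ref{thm:uniflmi}, and it exposes explicitly where the positivity of $R$ enters (namely, in ruling out $x^*Q^\top Ex=0$), which is informative for the later variants in Section~\ref{sec:specialomega} where $R$ is allowed to be indefinite.
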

\begin{proof}
Let $(E,A)$ be a DH matrix pair, where $A=(J-R)Q$, $R \succ 0$, $J^\top =-J$, $Q$ invertible with $E^\top Q=Q^\top E \succeq 0$ and~\eqref{eq311} holds. In view of Theorem~\ref{thm:equiasym}, every DH matrix pair $(E,(J-R)Q)$ with $R \succ 0$ is regular and impulse free. Thus in order to show that $(E,(J-R)Q)$ is $\Omega$-admissible, we only have to show that all finite eigenvalues of $(E,(J-R)Q)$ lie inside $\Omega$-region. For this, let $\lambda$ be an eigenvalue of $(E,A)$ and let $x \in \C^n\setminus \{0\}$ be the corresponding eigenvector, that is, $(J-R)Qx=\lambda Ex$. This implies that 
\begin{equation}\label{eq:suff1}
	x^*Q^\top (J-R)Qx=\lambda x^*Q^\top Ex \quad \text{and}\quad 	-x^*Q^\top (J+R)Qx=\overline \lambda x^*Q^\top Ex,
\end{equation}
since $Q$ is invertible and $Q^\top E =E^\top Q$. Thus
\begin{align}\label{eq:suff2}
	&\left(B + \lambda C + \overline {\lambda} C^\top \right) \otimes x^{*} Q^\top  E x \nonumber \\
	&= B \otimes x^{*} Q^\top  E x + \lambda C \otimes x^{*} Q^\top  E x + \overline {\lambda} C^\top  \otimes x^{*} Q^\top  E x \nonumber\\
	&= B \otimes x^{*} Q^\top  E x + C \otimes \lambda x^{*} Q^\top  E x + C^\top  \otimes \overline {\lambda} x^{*} Q^\top  E x \nonumber \\
	&= B \otimes x^{*} Q^\top  E x + C \otimes x^{*} Q^\top  (J - R) Q x - C^\top  \otimes x^{*} Q^\top  (J + R) Q x \quad \text{[from~\eqref{eq:suff1}]} \nonumber \\
	&= B \otimes x^{*} Q^\top  E x + (C - C^\top ) \otimes x^{*} Q^\top  J Q x - (C + C^\top ) \otimes x^{*} Q^\top  R Q x \nonumber \\
	&= (I_n \otimes x)^{*} \left( B \otimes Q^\top  E + (C - C^\top ) \otimes Q^\top  J Q - (C + C^\top ) \otimes Q^\top  R Q \right) (I_n \otimes x)  \nonumber \\
	&= (I_n \otimes x)^{*} \mathcal{M}_{\Omega}(E,J, R, Q) (I_n \otimes x) \prec 0, 
\end{align}	
since  $\mathcal{M}_{\Omega}(E,J, R, Q) \prec 0$. Further, note that $x^*Q^\top Ex \neq 0$, because if $x^*Q^\top Ex=0$, then from~\eqref{eq:suff1} we have $x^*Q^\top (J-R)Qx=0$, which implies that $x^*Q^\top JQx =0$ and $x^*Q^\top RQx =0$, since $Q^\top JQ$ is skew-symmetric and $Q^\top RQ$ is symmetric. Also,  $x^*Q^\top RQx =0$ implies that $Q^\top RQx=0$, which is a contradiction  because $Q^\top RQ \succ 0$ as $Q$ is invertible and $R \succ 0$. Thus we have $x^*Q^\top Ex > 0$, since $Q^\top E \succeq 0$ and $x^*Q^\top Ex \neq 0$. Thus, in view of~\eqref{eq:suff2}, we have $\left(B + \lambda C + \overline {\lambda} C^\top \right) \otimes x^{*} Q^\top  E x \prec 0$ and $x^{*} Q^\top  E x>0$ , which implies that $(B + \lambda C + \overline {\lambda} C^\top ) \prec 0$ and therefore from~\eqref{eq:lmidef} $\lambda \in \Omega$. 
	\end{proof}

Next, we show that the converse of the above theorem holds for LMI regions $\Omega$ for which 
$\Omega_{u}$ is nonempty. 

\begin{theorem}\label{thm:iffomega1}
	Let $\Omega \in \C$ be an LMI region lying in the left half plane such that
	$\Omega_{u}= \{ z \in \mathbb{C} :~  Cz + C^{\top} \overline{z} \prec 0 \}$ is nonempty and let $(E,A)\in {(R^{n,n})}^2$ be a matrix pair. Then $(E,A) $ is $\Omega$-admissible if and only if $A=(J-R)Q$ for some $J,R,Q \in \R^{n,n}$ such that $R \succ 0$, $J^\top =-J$, $Q$ invertible with $Q^\top E =E^\top Q \succeq 0$, and $\mathcal M_{\Omega}(E,J,R,Q) \prec 0$, where the matrix  $\mathcal M_{\Omega}(E,J,R,Q)$ is defined by~\eqref{eq311}.
\end{theorem}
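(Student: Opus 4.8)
The forward (``if'') implication is nothing but Theorem~\ref{thm:DHsuff}: the hypotheses $R \succ 0$, $J^\top = -J$, $Q$ invertible with $Q^\top E = E^\top Q \succeq 0$ say precisely that $(E,(J-R)Q)$ is a DH matrix pair, and the condition $\mathcal M_\Omega(E,J,R,Q) \prec 0$ is exactly~\eqref{eq311}. So I only need to establish the converse. My plan is to manufacture a single invertible matrix $X$ with $E^\top X = X^\top E \succeq 0$ that \emph{simultaneously} certifies the eigenvalue location through $M_\Omega(E,A,X) \prec 0$ and the dissipativity through $A^\top X + X^\top A \prec 0$; once such an $X$ is available, setting $Q = X$ and splitting $AX^{-1}$ into its skew and symmetric parts will produce the required quadruple.

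The delicate issue is that a single witness $X$ must serve two purposes, and this is where I will invoke the intersection machinery of Remark~\ref{rem:uniflminter}. Since $\Omega$ lies in the open left half plane, I can write $\Omega = \Omega \cap \Omega'$, where $\Omega' = \{z \in \C : z + \overline z < 0\}$ is the uniform LMI region with $B' = 0$ and $C' = 1$. To legitimately apply Theorem~\ref{thm:uniflmi} to this intersection I must check that its uniform part, namely $\Omega_u \cap \Omega'$, is nonempty. This is the step that needs a small argument: $\Omega_u$ is a nonempty \emph{open} cone, and because every $z_0 \in \Omega_u$ is a recession direction of $\Omega$ (adding $t z_0$ with $t \ge 0$ to a point of $\Omega$ keeps it in $\Omega$, since this only decreases $f_\Omega$), while $\Omega \subseteq$ left half plane forces $\Re(z_0) \le 0$, one gets $\Omega_u \subseteq \{z : \Re(z) \le 0\}$; an open set cannot sit inside the imaginary axis, so $\Omega_u$ actually meets the open left half plane, giving $\Omega_u \cap \Omega' \ne \emptyset$.

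With nonemptiness in hand, $(E,A)$ being $\Omega$-admissible (equivalently $(\Omega\cap\Omega')$-admissible) yields, by Remark~\ref{rem:uniflminter}, an invertible $X$ with $E^\top X = X^\top E \succeq 0$, together with $M_\Omega(E,A,X) \prec 0$ and $M_{\Omega'}(E,A,X) = X^\top A + A^\top X \prec 0$. I then set $Q = X$, $J = \skewp(AQ^{-1}) = \tfrac12(AQ^{-1} - Q^{-\top}A^\top)$ and $R = -\sym(AQ^{-1}) = -\tfrac12(AQ^{-1} + Q^{-\top}A^\top)$, so that $J^\top = -J$, $R = R^\top$, and $(J-R)Q = A$. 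Definiteness of $R$ follows by a single congruence: $Q^\top R Q = -\tfrac12(Q^\top A + A^\top Q) \succ 0$, hence $R \succ 0$ because $Q$ is invertible. Finally I will verify the Kronecker identity $\mathcal M_\Omega(E,J,R,Q) = M_\Omega(E,A,X)$, which is exactly the chain of equalities~\eqref{eq:suff2} from the proof of Theorem~\ref{thm:DHsuff} read in reverse (using $Q^\top E = E^\top X$, $Q^\top A = X^\top A$, $A^\top Q = A^\top X$); combined with $M_\Omega(E,A,X) \prec 0$, this delivers~\eqref{eq311} and closes the argument.

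The only genuinely nontrivial point is the nonemptiness of $\Omega_u \cap \Omega'$, i.e.\ that the recession cone of $\Omega$ actually penetrates the open left half plane so that Theorem~\ref{thm:uniflmi} applies to the intersection; everything else reduces to the skew/symmetric decomposition, one congruence transformation, and the reversible Kronecker computation already carried out in Theorem~\ref{thm:DHsuff}.
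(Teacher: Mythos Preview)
Your proof is correct and follows the same route as the paper: invoke Theorem~\ref{thm:DHsuff} for one direction, and for the converse intersect $\Omega$ with the open left half plane, apply Theorem~\ref{thm:uniflmi} via Remark~\ref{rem:uniflminter} to produce a single certificate $X$ with both $M_\Omega(E,A,X)\prec 0$ and $X^\top A+A^\top X\prec 0$, then set $Q=X$ and split $AX^{-1}$ into its skew and symmetric parts. You are in fact more careful than the paper on one point---you explicitly verify that the uniform part of $\Omega\cap\Omega'$ is nonempty (via the recession-direction argument), which the paper tacitly assumes when it writes ``in view of Theorem~\ref{thm:uniflmi} and Remark~\ref{rem:uniflminter}.''
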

\begin{proof}
The ``only if" part follows from Thoerem~\ref{thm:DHsuff}. Thus we only prove the ``if part". For this, let $(E,A)$ be $\Omega$-admissible and $\Omega_u$ is nonempty. Then in view of  Theorem~\ref{thm:uniflmi} and Remark~\ref{rem:uniflminter}, there exists invertible $X \in \R^{n,n}$ such that $E^\top X=X^\top E \succeq 0$, $X^\top A+A^\top X \prec 0$ and 
\begin{equation}\label{eq:iffomega1}
	\mathcal M_{\Omega}(E,A,X):= B \otimes E^\top X + C \otimes X^\top A +C^\top  \otimes A^\top X \prec 0. 
\end{equation}
By setting 
\begin{equation*}
 Q=X, \quad 	R=-\frac{A X^{-1}+(A X^{-1})^{\top}}{2}, \quad \text{and}\quad  J=\frac{A X^{-1}-(A X^{-1})^{\top}}{2}, 
\end{equation*}
we have $J^\top =-J$, $R^\top =R$, $Q$ invertible such that $E^\top Q =Q^\top E \succeq 0$. Also, $R \succ 0$, since $X^\top A +A^\top X \prec 0$ implies that $(X^\top )^{-1}A^\top  +AX^{-1} \prec 0$  and thus $R=-\frac{A X^{-1}+(A X^{-1})^{\top}}{2} \succ 0$. Further, in view of~\eqref{eq:iffomega1}, we have
\begin{eqnarray*}
&&\mathcal M_{\Omega}(E,J,R,Q) \\
&&= B \otimes E^\top Q + (C-C^\top )\otimes Q^\top JQ - (C+C^\top ) \otimes Q^\top RQ \\
&&= B \otimes E^\top X +(C-C^\top )\otimes X^\top   \frac{A X^{-1}-(A X^{-1})^{\top}}{2}  X+  (C+C^\top )\otimes X^\top   \frac{A X^{-1}+(A X^{-1})^{\top}}{2}  X \\
&&= B \otimes E^\top X +(C-C^\top )\otimes \frac{X^\top A-A^\top X}{2} + (C+C^\top )\otimes \frac{X^\top A+A^\top X}{2} \\
&&= B \otimes E^\top X + C \otimes X^\top A +C^\top  \otimes A^\top X \prec 0.  
\end{eqnarray*} 
\end{proof}

In Theorem~\ref{thm:iffomega1}, we have provided a DH characterization of $\Omega$-admissible matrix pairs for which $\Omega_u$ is nonempty. As a result, we have a DH characterization for all nonempty uniform LMI regions $\Omega$, or equivalently, for nonempty LMI regions which are cones in the complex plane~\cite{Kus19}. For example, left half of the complex plane, conic sector regions, etc. 

However, there are regions $\Omega$, such as vertical strips, that do not satisfy the condition that $\Omega_u$ is nonempty in Theorem~\ref{thm:iffomega1}. For such regions, we have the following theorem that relaxes the assumption on $\Omega_u$ and results in a nonstrict LMI condition.  
\begin{theorem}\label{thm:generalEinvert}
	Let $\Omega$ be an LMI region that lies in the open left half of the complex plane and $(E,A)\in (\R^{n,n})^2$ such that $E$ is invertible. Then $(E,A)$ is $\Omega$-admissible if and only if $A=(J-R)Q$ for some $J,R,Q \in \R^{n,n}$ such that $J^\top =-J$, $R\succ 0$, $Q$ invertible with $E^\top Q=Q^\top E \succeq 0$ and $S \in \R^{n,n}$ invertible with $S^\top E=E^\top S \succeq 0$, and 
	\begin{equation}\label{eq:generalEinvert11}
	\mathcal M_{\Omega}(E,J,R,Q,S):=B \otimes Q^\top E + (C-C^\top )\otimes Q^\top JQ -(C+C^\top )\otimes Q^\top RQ + I_n \otimes E^\top S \preceq 0.
	\end{equation}
\end{theorem}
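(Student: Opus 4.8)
The plan is to prove the two implications separately, dispatching the easier ``if'' direction by an eigenvalue argument that mirrors the proof of Theorem~\ref{thm:DHsuff}, and the ``only if'' direction by using the invertibility of $E$ to pass to the standard (non-descriptor) matrix $M=E^{-1}A$, where the classical strict LMI-region characterization is available and does not require $\Omega_u$ to be nonempty.

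For the ``if'' direction, suppose $A=(J-R)Q$ with the stated properties and $\mathcal M_\Omega(E,J,R,Q,S)\preceq 0$. Since $R\succ 0$, the pair $(E,(J-R)Q)$ is a DH pair (Definition~\ref{defdhpair}), hence regular, impulse-free and asymptotically stable by Theorem~\ref{thm:equiasym}; it remains only to locate the finite eigenvalues. Let $\lambda$ be a finite eigenvalue with right eigenvector $x$, so $(J-R)Qx=\lambda Ex$. Repeating the computation of Theorem~\ref{thm:DHsuff} and appending the extra block $I_n\otimes E^\top S$, I would obtain
\[
(I_n\otimes x)^*\,\mathcal M_\Omega(E,J,R,Q,S)\,(I_n\otimes x)=\bigl(B+\lambda C+\overline\lambda C^\top\bigr)(x^*Q^\top E x)+(x^*E^\top S x)\,I_n=\alpha\,f_\Omega(\lambda)+\beta I_n,
\]
with $\alpha=x^*Q^\top E x$ and $\beta=x^*E^\top Sx$. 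Because $E,Q$ are invertible and $Q^\top E\succeq 0$, the matrix $Q^\top E$ is in fact positive definite, so $\alpha>0$; likewise $E^\top S$ is positive definite, so $\beta>0$. From $\alpha f_\Omega(\lambda)+\beta I_n\preceq 0$ I then get $f_\Omega(\lambda)\preceq-(\beta/\alpha)I_n\prec 0$, whence $\lambda\in\Omega$ by~\eqref{eq:lmidef}. Here the invertibility of \emph{both} $E$ and $S$ is exactly what upgrades the non-strict LMI into the open membership $f_\Omega(\lambda)\prec 0$.

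For the ``only if'' direction, the eigenvalues of $M:=E^{-1}A$ are precisely the finite eigenvalues of $(E,A)$, hence lie in $\Omega$. Writing $G:=E^\top Q$ and $H:=E^\top S$, the substitution $Q=E^{-\top}G$ gives $Q^\top A=GM$, $A^\top Q=M^\top G$ and $Q^\top E=E^\top Q=G$, so the conditions to be produced become: find symmetric $G\succ 0$, $H\succ 0$ with $GM+M^\top G\prec 0$ (equivalently $R\succ 0$) and $B\otimes G+C\otimes GM+C^\top\otimes M^\top G+I_n\otimes H\preceq 0$. To obtain a single $G$ that is simultaneously a region certificate and a Lyapunov certificate, I would apply the classical strict matrix LMI-region theorem~\cite{ChilG96} not to $\Omega$ but to the region $\Omega$ written as the intersection $\Omega\cap\{z:\Re z<0\}$, whose characteristic matrices are the block-diagonal $B'=\mathrm{diag}(B,0)$, $C'=\mathrm{diag}(C,1)$ of Remark~\ref{rem:uniflminter}. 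Since $M$ has its spectrum in $\Omega\subseteq\{\Re z<0\}$, this yields $G\succ 0$ with $B'\otimes G+C'\otimes GM+C'^{\top}\otimes M^\top G\prec 0$, and the block structure splits this one inequality into $B\otimes G+C\otimes GM+C^\top\otimes M^\top G\prec 0$ and $GM+M^\top G\prec 0$ with the \emph{same} $G$. Setting $Q=E^{-\top}G$ (so $Q^\top E=E^\top Q=G\succ 0$ and $Q^\top A+A^\top Q=GM+M^\top G\prec 0$, i.e.\ $R\succ 0$ in $AQ^{-1}=J-R$) makes the first block exactly $\mathcal M_\Omega(E,J,R,Q)\prec 0$; finally $S=\varepsilon E^{-\top}$ with $\varepsilon>0$ small gives $E^\top S=\varepsilon I_n\succeq 0$, keeps $S$ invertible, and preserves~\eqref{eq:generalEinvert11} by strictness.

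The main obstacle is not a single computation but a convention/structure tension. The DH parametrization naturally produces the ``left'' Lyapunov convention $Q^\top A+A^\top Q$, while the cited descriptor theorems (Theorems~\ref{thm:uniflmi} and~\ref{thm:genflmi}) are phrased in mixed conventions; more seriously, Theorem~\ref{thm:uniflmi} needs $\Omega_u$ nonempty, which fails for the very regions (vertical strips, discs) this theorem targets. The two devices that dissolve these difficulties are the reduction to $M=E^{-1}A$ and the intersection of $\Omega$ with the left half-plane: the former removes all descriptor subtleties (every $E^\top(\cdot)\succeq 0$ constraint becomes genuine positive definiteness once $E$ is invertible), while the latter forces a single certificate $G$ to witness both $\Omega$-membership and asymptotic stability, which is exactly what is needed to recover $R\succ 0$ and the slack term $I_n\otimes E^\top S$ at once. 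The delicate point to check is precisely that the strict matrix theorem imposes no nonemptiness hypothesis on $\Omega_u$ (unlike its descriptor counterparts), so that it applies to every LMI region in the open left half-plane.
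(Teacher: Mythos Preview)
Your ``if'' direction is essentially the paper's argument: both compress the eigenvector relation into $(I_n\otimes x)^*\mathcal M_\Omega(I_n\otimes x)$, then use the strict positivity of $x^*E^\top Sx$ to turn the non-strict LMI into $f_\Omega(\lambda)\prec 0$. The only cosmetic difference is that you invoke invertibility of $E$ to get $Q^\top E\succ 0$ directly, whereas the paper argues $x^*Q^\top Ex>0$ from $R\succ0$; both are fine.

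Your ``only if'' direction is correct but takes a genuinely different route from the paper. The paper stays at the descriptor level: it applies Theorem~\ref{thm:genflmi} (the result from~\cite{r5}) together with the intersection trick of Remark~\ref{rem:uniflminter} to obtain $P,S$ satisfying the non-strict block LMI plus $A^\top S+S^\top A\prec 0$ and $P^\top A+A^\top P+E^\top S\preceq 0$; invertibility of $E$ then forces $E^\top S\succ 0$, hence $P^\top A+A^\top P\prec 0$, and one sets $Q=P$ with $J,R$ the skew/symmetric parts of $AP^{-1}$. You instead exploit invertibility of $E$ at the outset to pass to the standard matrix $M=E^{-1}A$, apply the classical Chilali--Gahinet strict LMI theorem to the (redundant) intersection $\Omega\cap\{\Re z<0\}$ to obtain a single $G\succ 0$ serving simultaneously as region certificate and Lyapunov matrix, pull back via $Q=E^{-\top}G$, and manufacture $S=\varepsilon E^{-\top}$ afterwards. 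Your approach is more elementary in that it only needs the matrix-level LMI theorem rather than the descriptor result of~\cite{r5}; the paper's approach, by contrast, keeps the argument at the pair level and thereby makes clearer how the singular-$E$ variant (the theorem immediately following) arises by replacing $S$ with $Q$. Your closing caveat is well placed: the argument hinges on the fact that the matrix LMI-region theorem carries no $\Omega_u\neq\emptyset$ hypothesis, which is indeed the case.
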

\begin{proof}
	First suppose that $A=(J-R)Q$ for some $J,R,Q \in \R^{n,n}$ satisfying  $J^\top =-J$, $R\succ 0$, Q invertible with $E^\top Q=Q^\top E \succeq 0$, and~\eqref{eq:generalEinvert11} holds with a matrix $S \in \R^{n,n}$ such that $S^\top E=E^\top S \succeq 0$. Let $\lambda \in \C$ be an eigenvalue of $(E,A)$ and $x\in \C^n \setminus \{0\}$ be such that $(J-R)Qx =\lambda Ex$. This implies that 
	\begin{equation}\label{proof:generalEinvert2}
		x^*Q^\top (J-R)Qx=  \lambda x^*Q^\top  Ex \quad \text{and} \quad 
			-x^*Q^\top (J+R)Qx=  \overline \lambda x^*Q^\top  Ex,
	\end{equation}	
	since $Q$ is invertible and $Q^\top E=E^\top Q$. Thus, we have
	\begin{eqnarray}\label{proof:generalEinvert3}
		&&(B +\lambda C + \overline \lambda C^\top ) \otimes x^*Q^\top Ex \nonumber\\
		&&= B \otimes x^*Q^\top Ex + \lambda C \otimes x^*Q^\top Ex + \overline \lambda C^\top  \otimes x^*Q^\top Ex  \nonumber\\
	&& =  B \otimes x^*Q^\top Ex + C \otimes x^*Q^\top (J-R)Qx -C^\top  \otimes x^*Q^\top (J+R)Qx \quad (\text{from}~\eqref{proof:generalEinvert2}) \nonumber\\
	&& =  B \otimes x^*Q^\top Ex + (C-C^\top ) \otimes x^*Q^\top JQx -(C+C^\top ) \otimes x^*Q^\top RQx \nonumber \\
		&& = (I_n \otimes x^*)\left( B \otimes Q^\top E + (C-C^\top ) \otimes Q^\top JQ- (C+C^\top ) \otimes Q^\top RQ + I_n \otimes E^\top S\right) (I_n \otimes x)  \nonumber\\
		&& \hspace{0.5cm} - (I_n \otimes x^*) (I_n \otimes E^\top S) (I \otimes x)  \nonumber\\
		&&=(I_n \otimes x^*) 	\mathcal M_{\Omega}(E,J,R,Q,S) (I_n \otimes x)- I_n \otimes x^*E^\top Sx  \prec 0,
	\end{eqnarray}
	since $\mathcal M_{\Omega}(E,J,R,Q,S) \preceq 0$ and $ I_n \otimes x^*E^\top Sx \succ 0$ as $E$ and $S$ are invertible with $E^\top S\succeq 0$.  Also, since $Q^\top E \succeq 0$, we have $x^* Q^\top Ex > 0$, as if  $x^* Q^\top Ex=0$, then from~\eqref{proof:generalEinvert2}, we have $x^* QRQx=0$ and thus $RQx=0$, which is a contradiction as $Q$ is invertible and $R \succ 0$. 
	Thus, from~\eqref{proof:generalEinvert3}, we have $(B +\lambda C + \overline \lambda C^\top ) \otimes x^*Q^\top Ex \prec 0$ and $ x^*Q^\top Ex >0$ implies that $B +\lambda C + \overline \lambda C^\top  \prec 0$. This implies from~\eqref{eq:lmidef} that $\lambda \in \Omega$. Further, $(E,A)$ is a DH matrix pair with $R \succ 0$, thus in view of Theorem~\ref{thm:equiasym}, we have that $(E,A)$
is $\Omega$-admissible. 
	
Conversely, let $(E,A)$ is $\Omega$-admissible. Then in view of Theorem~\ref{thm:genflmi} and Remark~\ref{rem:uniflminter}, there exists $P, S\in \R^{n,n}$ such that 
\begin{equation}\label{proof:generalEinvert44}
E^\top P=P^\top E \succeq 0,\quad E^\top S=S^\top E \succeq 0, \quad A^\top S+S^\top A \prec 0,\quad  P^\top A+A^\top P+E^\top S \preceq 0,
\end{equation}
and 
\begin{equation}\label{proof:generalEinvert45}
B \otimes E^\top P + C\otimes P^\top A +C^\top  \otimes A^\top P + I \otimes E^\top S \preceq 0.
\end{equation}
Note that $S$ is invertible, since $A^\top S +S^\top A \prec 0$, and thus $E^\top S \succ 0$, since $E^\top S \succeq 0$ and $E$ is assumed to be invertible. Thus from~\eqref{proof:generalEinvert44}, we have that 
\[
P^\top A+A^\top P \prec 0.
\]
By setting, 
\begin{equation}
	Q=P, \quad 	R=-\frac{A P^{-1}+(A P^{-1})^{\top}}{2}, \quad \text{and}\quad  J=\frac{A P^{-1}-(A P^{-1})^{\top}}{2}, 
\end{equation}
we have $J^\top =-J$, $R^\top =R$, $Q$ invertible such that $E^\top Q =Q^\top E \succeq 0$. Also, $R \succ 0$, since $P^\top A +A^\top P \prec 0$  and $A=(J-R)Q$. Further, we have that
\begin{eqnarray*}
	&&\mathcal M_{\Omega}(E,J,R,Q,S)\\
    &&=B \otimes Q^\top E + (C-C^\top )\otimes Q^\top JQ -(C+C^\top )\otimes Q^\top RQ + I_n \otimes E^\top S \\
	&&= B \otimes Q^\top E + (C-C^\top )\otimes \frac{Q^\top A-A^\top Q}{2} 
	-(C+C^\top )\otimes \frac{Q^\top A+A^\top Q}{2} + I_n \otimes E^\top S  \\
	&&=B \otimes Q^\top E + C\otimes Q^\top A +C^\top  \otimes A^\top Q + I_n \otimes E^\top S	\preceq 0.
	\end{eqnarray*} 
\end{proof}

The ``only if" part of Theorem~\ref{thm:generalEinvert} also holds for singular $E$ when $S$ is replaced by $Q$, as shown in the following result. 
\begin{theorem}
	Let $\Omega$ be an LMI region that lies in the open left half of the complex plane and $(E,(J-R)Q)\in (\R^{n,n})^2$ be a DH matrix pair, that is, 
	\[
	J^\top =-J,\quad R\succ 0, \quad Q~\text{invertible} \quad \text{and} \quad Q^\top E=E^\top Q \succeq 0.
	\]
	Then $(E,(J-R)Q)\in (\R^{n,n})^2$ is $\Omega$-admissible if 
	\begin{equation*}
		B \otimes EQ^{-1} + (C-C^\top )\otimes J -(C+C^\top )\otimes R + I_n \otimes EQ^{-1} \preceq 0.
	\end{equation*}
\end{theorem}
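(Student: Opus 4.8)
The plan is to adapt the sufficiency (``if'') argument from the proof of Theorem~\ref{thm:generalEinvert}, with the matrix $E^\top S$ there (whose strict definiteness relied on the invertibility of $E$) replaced throughout by $EQ^{-1}$, and to recover the strict positivity that is needed from the hypothesis $R\succ 0$ rather than from invertibility of $E$, exactly as was done in the proof of Theorem~\ref{thm:DHsuff}. Before starting I would record two facts. Since $Q^\top E=E^\top Q\succeq 0$ and $Q$ is invertible, the congruence $EQ^{-1}=(Q^{-1})^\top (Q^\top E)Q^{-1}$ shows that $EQ^{-1}$ is symmetric and positive semidefinite. Moreover, a short computation shows that the matrix on the left-hand side of the hypothesis is precisely the congruence $(I_n\otimes (Q^{-1})^\top)\,\mathcal M_{\Omega}(E,J,R,Q,Q)\,(I_n\otimes Q^{-1})$ of the matrix from~\eqref{eq:generalEinvert11} evaluated at $S=Q$; this explains the substitution $S\to Q$ but does not let us quote Theorem~\ref{thm:generalEinvert} directly, since that result assumes $E$ invertible, so the argument must be redone.

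Because the pair is a DH matrix pair with $R\succ 0$, Theorem~\ref{thm:equiasym} already gives that $(E,(J-R)Q)$ is regular and impulse-free, so it only remains to place its finite eigenvalues inside $\Omega$. I would take a finite eigenvalue $\lambda$ with eigenvector $x\neq 0$, put $y=Qx\neq 0$, and rewrite $(J-R)Qx=\lambda Ex$ as $(J-R)y=\lambda EQ^{-1}y$. Forming the quadratic form $y^*(\cdot)y$ and its conjugate transpose, and using $J^\top=-J$, $R^\top=R$, together with the symmetry of $EQ^{-1}$, gives
\[
y^*(J-R)y=\lambda\,y^* EQ^{-1}y \qquad\text{and}\qquad -y^*(J+R)y=\overline{\lambda}\,y^* EQ^{-1}y .
\]
Substituting these into $(B+\lambda C+\overline{\lambda}C^\top)\otimes y^* EQ^{-1}y$ and collecting terms, in analogy with~\eqref{proof:generalEinvert3}, I expect to reach the identity
\[
(B+\lambda C+\overline{\lambda}C^\top)\otimes y^* EQ^{-1}y=(I_n\otimes y)^*\,N\,(I_n\otimes y)-I_n\otimes y^* EQ^{-1}y,
\]
where $N$ denotes the matrix on the left-hand side of the displayed LMI.

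The crux, and the only genuinely new point compared with Theorem~\ref{thm:generalEinvert}, is the strict inequality $y^* EQ^{-1}y>0$, which now cannot come from definiteness of $EQ^{-1}$ (it is only positive semidefinite, as $E$ may be singular). Here I would argue as in Theorem~\ref{thm:DHsuff}: if $y^* EQ^{-1}y=0$ then the first relation forces $y^*(J-R)y=0$, and taking real parts (with $y^*Jy$ purely imaginary and $y^*Ry$ real) gives $y^*Ry=0$, contradicting $R\succ 0$ and $y\neq 0$; combined with $EQ^{-1}\succeq 0$ this yields $y^* EQ^{-1}y>0$. Since $N\preceq 0$, the right-hand side of the identity is strictly negative, so $(B+\lambda C+\overline{\lambda}C^\top)\otimes y^* EQ^{-1}y\prec 0$; dividing by the positive scalar $y^* EQ^{-1}y$ gives $B+\lambda C+\overline{\lambda}C^\top\prec 0$, i.e.\ $\lambda\in\Omega$ by~\eqref{eq:lmidef}. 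As all finite eigenvalues lie in $\Omega$ and the pair is regular and impulse-free, it is $\Omega$-admissible. I expect the whole difficulty to be concentrated in this positivity step, which is exactly what removes the need for $E$ to be invertible.
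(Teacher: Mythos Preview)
Your proposal is correct and follows essentially the same approach the paper intends: the paper merely states that the argument is similar to the sufficiency direction in the proof of Theorem~\ref{thm:generalEinvert}, and your adaptation---working with $y=Qx$ so that the LMI appears with $EQ^{-1}$ in place of $Q^\top E$, and recovering the strict inequality $y^*EQ^{-1}y>0$ from $R\succ 0$ rather than from invertibility of $E$---is exactly the modification required.
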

\begin{proof}
The proof is similar to the proof of the ``only if" part of Theorem~\ref{thm:generalEinvert}.
\end{proof}


\section{Special LMI regions and $\Omega$-stability}\label{sec:specialomega}

We have discussed, in Section~\ref{sec:DHchara}, relationship between DH pairs and $\Omega$-admissibility, where $\Omega$ is an LMI region inside left half of the complex region. If the $\Omega$-region intersects with the right half, then the results of Section~\ref{sec:DHchara} do not hold in general, as their proofs use the definiteness of the dissipation matrix $R$ which ensures the impulsefreeness and enforces all eigenvalues of $(E,(J-R)Q)$ to the left half of the complex plane.

In this section, we consider some special LMI regions $\Omega$ that are allowed to intersect with the right half of the complex plane and derive conditions on $\Omega$-stable matrix pairs $(E,(J-R)Q)$ by allowing $R$ to be indefinite. These regions include  left and right conic sectors,  disks centered on the real line, vertical left and right halfplanes, ellipsoid centered on the real line, left and right parabolic regions centered on the real line, left and right hyperbolas with vertices on the real line, and horizontal strip. 

 With the exception of the definiteness constraint on $R$, we derive constraints on the matrix pairs $(E, (J-R)Q)$ that have all their eigenvalues inside these particular regions. Although their proofs differ from the proof of Theorem~\ref{thm:DHsuff}, the results are comparable to it. This section adopts the notation in~\cite{ChouGS24} for these specific LMI regions and is an extension of Section~\ref{sec:DHchara} in~\cite{ChouGS24} for matrix pairs.

Let us show an example for the disks centered on the real line, that is, 
  let us consider a disk centered at $(q,0)$ with radius $r>0$, denoted by $\Omega_D(q,r)$, and defined as
\[
\Omega_D(q,r)
:= \left\{ z\in \C \ : \ |z-q|<r\right\}.
\] 
The disk region $\Omega_D(q,r)$ can be characterized in form of~\eqref{eq:lmidef} of an LMI region with matrices
\[
B=\mat{cc} -r & q \\q &-r \rix \quad \text{and}\quad 
C=\mat{cc}0&0\\-1&0\rix.
\]
The following result derives a condition for $\Omega_D$-stability.
\begin{theorem}\label{thm:omegastable}
Let $q \in \R$ and $r>0$, and consider the disk region $\Omega_D(q,r)$. Let $(E,(J-R)Q) \in (\R^{n,n})^2$ be a regular  matrix pair with $J^\top= -J$, $R^\top =R$, and $Q$ invertible with $Q^\top E=E^\top Q \succeq 0$ such that 
\begin{equation}\label{thm:stabcond1}
    \mat{cc} rEQ^{-1} & qEQ^{-1}\\ qEQ^{-1} & rEQ^{-1}\rix  \succ
\mat{cc} 0 & J-R \\(J-R)^\top  & 0\rix.
\end{equation}
Then $(E,(J-R)Q)$ is $\Omega_D$-stable.
\end{theorem}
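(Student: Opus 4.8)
The plan is to show that every finite eigenvalue $\lambda$ of $(E,(J-R)Q)$ lies in $\Omega_D(q,r)$; since the pair is assumed regular, this is exactly what $\Omega_D$-stability means. Fix such a $\lambda$ and let $x\in\C^n\setminus\{0\}$ be a corresponding left eigenvector, so that $x^*(J-R)Q=\lambda x^*E$. Proceeding as in the proof of Lemma~\ref{lem2} (using $Q^\top E=E^\top Q\succeq 0$, which forces $EQ^{-1}\succeq 0$, together with regularity), I would record the two facts I actually need: writing $d:=x^*EQ^{-1}x$, one has $x^*(J-R)x=\lambda d$ and $d>0$. The key idea is then to feed a suitable test vector into the strict matrix inequality~\eqref{thm:stabcond1}, turning the $2n\times 2n$ LMI into a single scalar inequality about $\lambda$.

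Concretely, for scalars $\alpha,\gamma\in\C$ I would use the block vector $w=\mat{c}\alpha x\\ \gamma x\rix$, which is nonzero whenever $(\alpha,\gamma)\neq(0,0)$, and evaluate $w^*(\cdot)w$ on both sides of~\eqref{thm:stabcond1}. Because every block on the left-hand side is a scalar multiple of $EQ^{-1}$, and the two off-diagonal blocks on the right contribute $x^*(J-R)x=\lambda d$ and its conjugate $\overline\lambda d$, the quadratic forms collapse to
\[
w^*\,(\mathrm{LHS})\,w=d\big[r(|\alpha|^2+|\gamma|^2)+2q\,\Re(\overline\alpha\gamma)\big],
\qquad
w^*\,(\mathrm{RHS})\,w=2d\,\Re(\overline\alpha\gamma\,\lambda).
\]
Since~\eqref{thm:stabcond1} asserts $\mathrm{LHS}-\mathrm{RHS}\succ0$ and $d>0$, dividing by $d$ yields, for all $(\alpha,\gamma)\neq(0,0)$,
\[
r\big(|\alpha|^2+|\gamma|^2\big)>2\,\Re\!\big(\overline\alpha\gamma(\lambda-q)\big).
\]

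It remains to choose $\alpha,\gamma$ so as to extract the disk condition. Taking $\alpha=1$ and $\gamma=\overline{(\lambda-q)}/|\lambda-q|$ (the case $\lambda=q$ being trivial, since then $|\lambda-q|=0<r$) makes $\overline\alpha\gamma(\lambda-q)=|\lambda-q|$ real and nonnegative and gives $|\alpha|^2+|\gamma|^2=2$, so the inequality reduces to $2r>2|\lambda-q|$, i.e.\ $|\lambda-q|<r$. Hence $\lambda\in\Omega_D(q,r)$, and since $\lambda$ was an arbitrary finite eigenvalue, $(E,(J-R)Q)$ is $\Omega_D$-stable. I expect the only genuine obstacle to be identifying the right test vector, and in particular the phase of $\gamma$ that aligns $\overline\alpha\gamma(\lambda-q)$ with the positive real axis so that the Cauchy--Schwarz-type bound is tight; once that choice is fixed, everything else is routine block-multiplication. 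Note that, unlike Theorem~\ref{thm:DHsuff}, this argument never uses positive definiteness of $R$ — it relies only on $R^\top=R$ and the scalar identity $x^*(J-R)x=\lambda d$ — which is precisely what allows the result to cover disks straddling the imaginary axis rather than merely $\Omega$-admissibility in the open left half-plane.
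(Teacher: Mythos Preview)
Your proof is correct and follows essentially the same approach as the paper: both sandwich the $2n\times 2n$ inequality~\eqref{thm:stabcond1} with eigenvector-based test vectors of the form $(\alpha x,\gamma x)$ and use $x^*(J-R)x=\lambda\,x^*EQ^{-1}x$ together with $x^*EQ^{-1}x>0$ (from regularity) to reduce to a statement about $\lambda$ alone. The only cosmetic difference is that the paper sandwiches with the block matrix $\diag(x,x)$ to obtain the $2\times 2$ LMI $\mat{cc} r & q-\lambda\\ q-\overline\lambda & r\rix\succ 0$ directly (invoking Lemma~\ref{lem2}), whereas you keep $(\alpha,\gamma)$ as free scalars and then pick the phase $\gamma=\overline{(\lambda-q)}/|\lambda-q|$ to extract $|\lambda-q|<r$; these are two equivalent ways of reading off the disk condition from the same reduced inequality.
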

\begin{proof}
Consider $(E,(J-R)Q$, where $J^\top =-J$, $R^\top =R$, and $Q$ invertible with $Q^\top E=E^\top Q \succeq 0$ such that~\eqref{thm:stabcond1} holds. Let $\lambda \in \C$ be an eigenvalue of $(E,(J-R)Q)$ and $v \in \C^n \setminus \{0\}$ be a corresponding eigenvector, that means
\begin{eqnarray}\label{prof:stabcond1}
    v^*(J-R)Q=\lambda v^*E.
\end{eqnarray}
Since~\eqref{thm:stabcond1} holds, we have that 
\[
\mat{cc}v^* & 0 \\ 0 & v^* \rix
\mat{cc}rEQ^{-1} & q EQ^{-1} \\ q EQ^{-1} & r EQ^{-1}\rix
\mat{cc}v & 0 \\ 0 & v \rix 
~\succ~
\mat{cc}v^* & 0 \\ 0 & v^* \rix
\mat{cc}0 & J-R \\ (J-R)^\top  & 0\rix
\mat{cc}v & 0 \\ 0 & v \rix. 
\]
This implies that 
\begin{eqnarray}\label{prof:stabcond2}
    \mat{cc}r & q \\ q & r\rix (v^*EQ^{-1}v) ~\succ~ 
    \mat{cc}0 & v^*(J-R)v \\ v^*(J-R)^\top v & 0\rix.
\end{eqnarray}
Note that $v^*EQ^{-1}v  \neq 0$, since $E^\top Q \succeq 0$ implies that $EQ^{-1} \succeq 0$. In fact, if $v^*EQ^{-1}v=0$, then we have $v^*EQ^{-1} = 0$ as $EQ^{-1} \succeq 0$ and also from~\eqref{prof:stabcond1} $v^*(J-R)=0$. Thus $v$ becomes a nonzero vector in the common left null space of $E$ and $(J-R)Q$. This implies that $(E,(J-R)Q$ is singular, which is a contradiction. Thus we have $v^*EQ^{-1}v  \neq 0$ and, in particular, $v^*EQ^{-1}v  > 0$ as $EQ^{-1} \succeq 0$. Thus from~\eqref{prof:stabcond2}, we obtain that 
\[
    \mat{cc}r & q \\ q & r\rix \succ 
    \mat{cc}0 & \frac{v^*(J-R)v}{v^*EQ^{-1}v} \\ \frac{v^*(J-R)^\top v}{v^*EQ^{-1}v} & 0\rix.
\]
and in view of Lemma~\ref{lem2}, we have 
\[
    \mat{cc}r & q-\lambda \\ q- \overline \lambda  & r\rix \succ 0.
\]
This implies that $\lambda \in \Omega_D(q,r)$ and hence $(E,(J-R)Q)$ is $\Omega_D$-stable.
\end{proof}

A result similar to Theorem~\ref{thm:omegastable} can also be obtained for other regions in~\cite[Section~3]{ChouGS24}. 
In Table~\ref{tab:specialdhpairlmi}, we provide a sufficient condition for matrix pairs $(E,(J-R)Q)$ with eigenvalues in these regions. 

\begin{table}[ht]
    \centering
\begin{tabular}{|c|c|}
\hline
\textbf{LMI Region} & 
\begin{tabular}{c}
\textbf{Constraints on $\Big. J^\top =-J \Big.$, $R^\top =R$,} \\
\textbf{$\Big. Q \Big.$ with $E^\top Q \succeq 0$, and $T := EQ^{-1}$}
\end{tabular} 
\\ \hline
Left Conic Sector: $\Omega_{C_L}(a,\theta)$ 
& $\mat{cc} \sin(\theta) (aT+R) & -\cos(\theta) J \\ \cos(\theta) J & \sin(\theta) (aT+R) \rix \succ 0 $\\ \hline
Right Conic Sector: $ \Omega_{C_R}(a,\theta)$ & $\mat{cc} -\sin(\theta) (aT+R) & -\cos(\theta) J \\ \cos(\theta) J & -\sin(\theta) (aT+R) \rix \succ 0 $ \\ \hline
Disk: $\Omega_D(q,r)$ & $\mat{cc} rT & qT-J+R \\ qT+J+R & rT
\rix \succ 0 $ \\ \hline
Vertical strip: $ \Omega_V(h,k)$ & $\mat{cc} kT+R & 0 \\ 0 & -hT-R
\rix \succ 0$\\ \hline
Left halfplane: $ \Omega_V(-\infty,k)$ & $ kT+R \succ 0$ \\ \hline
Right halfplane: $ \Omega_V(h,\infty)$ & $ -hT-R \succ 0$\\ \hline
Ellipsoid: $ \Omega_E(q_e,a_e,b_e)$ & $\mat{cc} a_eT & q_eT -  \frac{a_e}{b_e} J + R \\ q_eT +  \frac{a_e}{b_e} J + R & a_eT\rix \succ 0 $ \\ \hline
Left parabolic region: $ \Omega_{P_L}(q_p,c_p)$ & $\mat{cc} T & -\sqrt{\frac{c_p}{2}}J \\ \sqrt{\frac{c_p}{2}}J  & q_pT+ R \rix \succ 0$ \\ \hline
Right parabolic region: $ \Omega_{P_R}(q_p,c_p)$ & $\mat{cc} T & -\sqrt{\frac{c_p}{2}}J \\ \sqrt{\frac{c_p}{2}}J  & -q_pT- R \rix \succ 0$\\ \hline
Left hyperbola: $ \Omega_{Hyp,L}(a_h,b_h)$ & $ \mat{cc} \frac{R}{a_h} & -T - \frac{J}{b_h}\\ -T+\frac{J}{b_h} & \frac{R}{a_h} \rix \succ 0$ \\ \hline
Right hyperbola: $\Omega_{Hyp,R}(a_h,b_h)$ & $ \mat{cc} -\frac{R}{a_h} & -T - \frac{J}{b_h}\\ -T+\frac{J}{b_h} & -\frac{R}{a_h} \rix \succ 0$ \\ \hline
Horizontal strip: $\Omega_H(w)$ & $\mat{cc} wT & -J \\ J & wT 
\rix \succ 0$\\ \hline
\end{tabular}
    \caption{$\Omega$-stability conditions on pairs $(E,(J-R)Q)$ for specific regions mentioned in Section 3 of~\cite{ChouGS24}.  }
    \label{tab:specialdhpairlmi}
\end{table}

\section{The nearest $\Omega$-admissible pair problem in DH form}\label{sec:omegastable}

Given an LMI region $\Omega$ and a pair $(E,A)$, we now want to tackle Problem~\eqref{eqa1}, that is, look for the pair $(\tilde E, \tilde A)$ that is $\Omega$-admissible and that is the nearest to $(E,A)$ in the Frobenius norm: 
\begin{equation} \label{eqa1v2}
\inf_{(\tilde{E},\tilde{A}) \in \mathcal{S}_{\Omega}^a}{{\|A-\tilde{A}\|}_{F}^{2} + \mu {\|E-\tilde{E}\|}_{F}^{2}}, 
\end{equation} 
where we have introduce a penalty parameter $\mu > 0$ that allows one to balance the importance of $A$ and $E$ in the objective.  
To solve~\eqref{eqa1v2}, we define the set 
$\mathcal{S}_{\Omega dh}^a$ containing all DH pairs $(\tilde E, (J-R)Q)$ such that 
\mbox{$\mathcal M_{\Omega}(\tilde E,J,R,Q) \prec 0$}, where $\mathcal M_{\Omega}(\tilde E,J,R,Q)$ is defined by~\eqref{eq311}, that is, 
\begin{equation}
    \mathcal{S}_{\Omega dh}^a:=
    \{(\tilde E, (J-R)Q) \in (\R^{n,n})^2:~J^{\top}=-J, R \succ 0, Q^\top \tilde E  \succeq 0, M_{\Omega}(\tilde E,J,R,Q) \prec 0
    \}.
\end{equation}
Let us introduce a new variable $T=\tilde E Q^{-1}$. Note that, for any invertible $Q$, 
\[
T = \tilde E Q^{-1}  \succeq 0 \quad 
\iff \quad  
Q^{\top} \tilde E \succeq 0, 
\] 
and also 
\[
M_{\Omega}(\tilde E,J,R,Q) \prec 0 \iff \hat M_{\Omega}(T,J,R) \prec 0,
\]
where 
\begin{eqnarray}
    \hat M_{\Omega}(T,J,R):= B \otimes T + (C-C^\top)\otimes J
    -(C+C^\top)\otimes R.
\end{eqnarray}
This implies that the set $\mathcal{S}_{\Omega dh}^a$ can be equivalently written as 
\[
\mathcal{S}_{\Omega dh}^a =   \{(TQ, (J-R)Q) \in (\R^{n,n})^2:~J^{\top}=-J, R \succ 0, T  \succeq 0, Q\,\text{invertible},\, \hat M_{\Omega}(T,J,R) \prec 0
    \}=:\hat {\mathcal S}_{\Omega dh}^a.
\]
In view of Theorem~\ref{thm:iffomega1}, if $\Omega$ is a nonempty uniform LMI region, then the set of all $\Omega$-admissible pairs can be characterized as the set $\mathcal{S}_{\Omega dh}^a$ of $\Omega$-admissible DH pairs, that is,  $\mathcal S_\Omega^a=\mathcal{S}_{\Omega dh}^a$.
Thus for uniform LMI regions, the problem~\eqref{eqa1v2} can be reformulated as the problem of finding the nearest $\Omega$-admissible DH pair from $\hat {\mathcal S}_{\Omega dh}^a$ as follows:
\begin{align} \label{eq:paramV2}
\min_{T \succeq 0, Q \text{ invertible}, J^\top = -J, R\succ 0}  & \quad 
{\| A - (J-R)Q\|}_F^2 + 
\mu {\|E - TQ \|}_F^2  \nonumber \\ 
\text{ such that } &  \quad   
(T,J,R) ~\text{ satisfies}~  \hat M_{\Omega}(T,J,R) \prec 0.  \tag{$\mathcal P_{\Omega dh}^a$}
\end{align}

For general LMI region $\Omega$ lying in the left half plane, in view of Theorem~\ref{thm:DHsuff}, we have that 
$\hat {\mathcal S}_{\Omega dh}^a  \subseteq \mathcal S_\Omega^a$. This implies that in such situations, a solution to the problem~\eqref{eq:paramV2} gives an approximate solution to the problem~\eqref{eqa1v2}.

If $\Omega$ intersects with the right half, an approximate solution to the $\Omega$-admissibility problem~\eqref{eqa1v2} is achieved by solving~\eqref{eq:paramV2} 
without the definiteness constraint on $R$. This is due to  Table~\ref{tab:specialdhpairlmi}, where the pair $(TQ,(J-R)Q)$ with $J^{\top}=-J$, $R^{\top} =R$, $T  \succeq 0$, $Q$ being invertible, and $\hat M_{\Omega}(T,J,R) \prec 0$, implies that $\Omega$-stable. Additionally, in all our numerical experiments, the final solution $(\tilde E,\tilde A)$ is found to be impulse-free, that is, $(\tilde E,\tilde A)$ has $\text{rank}(\tilde E)$ number of finite eigenvalues. By solving~\eqref{eq:paramV2} without a definiteness constraint on $R$, we obtain an approximate solution to the $\Omega$-stability problem that is also impulse-free and, hence, $\Omega$-admissible.


In this section, we propose two algorithms to solve~\eqref{eq:paramV2}:   
\begin{itemize}
    \item One specialized for the case of Hurwitz stability in Section~\ref{sec:algohur}. 

    \item One for the general problem~\eqref{eq:paramV2} in Section~\ref{sec:algoBCD}. 
\end{itemize}

\subsection{Fast gradient method for Hurwitz stability} \label{sec:algohur}

For the case of Hurwitz stability, \eqref{eq:paramV2} reduces to 
\begin{equation} \label{eq:paramHur} 
\min_{T \succeq 0, Q \text{ invertible}, J^\top = -J, R}    
{\| A - (J-R)Q\|}_F^2 + 
\mu {\|E - TQ \|}_F^2  
\quad
\text{ such that } 
\quad 
R \succeq 0. 
\end{equation}
To tackle~\eqref{eq:paramHur}, we adapt the algorithm from~\cite{r1}. 
In \cite{r1}, only Hurwitz stability was considered, and the authors used another similar parametrization, namely $H := \tilde E^\top Q$, leading to the optimization problem 
\begin{equation} \label{eq:Volker}
\min_{H \succeq 0, Q \text{ invertible}, J^\top = -J, R\succeq 0} 
{\| A - (J-R)Q\|}_F^2 + 
\mu {\|E^\top - H Q^{-1} \|}_F^2. 
\end{equation}
This parametrization involves the inverse of matrix $Q$ in the objective, and hence is numerically harder and more unstable to optimize. 
In fact, we will show that our new parametrization~\eqref{eq:paramHur}  outperforms the one in~\cite{r1}; see Section~\ref{sec:hurwitznumexp} for numerical experiments.   
 Moreover, and this is crucial, the parametrization  $H := \tilde E^\top Q$ would not lead to convex LMIs as $T = H Q^{-1}$ appears in the constraints and is not linear in $(H,Q)$ while it is linear in $T$. 

The algorithm from~\cite{r1} is a projected fast gradient method, that is, a projected gradient method with extrapolation that accelerates convergence. The projection onto the feasible set only requires the projection onto the PSD cone for the variables $T$ and $R$, which can be computed via an eigenvalue decomposition in $O(n^3)$ operations~\cite{Hig88b}. The adaptation of the algorithm from~\cite{r1} to tackle~\eqref{eq:Volker} was straightforward as the structure of both problems, \eqref{eq:Volker} and~\eqref{eq:paramHur}, is very similar (we replaced the variable $H$ by the variable $T$ in the code, and adapted the gradient and objective computation). Hence we do not provide further details here.

\paragraph{Initialization} To initialize $(J,R,T,Q)$, we rely on the same strategy as in~\cite{r1}: set $Q= I_n$, in which case the optimal $T$ is the projection onto the PSD cone of $E$, the optimal $J$ is the skew-symmetric part of $A$, that is, $(A-A^\top)/2$, and the optimal $R$ is the projection onto the PSD cone of the symmetric part of $A$, that is, $(A+A^\top)/2$.

\subsection{Block coordinate descent in the general case}   \label{sec:algoBCD}

When the feasible set does not involve PSD matrices as in the Hurwitz case, projecting onto the feasible set is more tricky, and we resort to the interior point method SDPT3~\cite{toh1999sdpt3,tutuncu2003solving} with CVX as a modeling system~\cite{cvx,gb08}. 
Moreover, we have experienced that gradient-based methods was not working as well as a simple block coordinate descent (BCD) method, because the cost of the projection is essentially the same as that of optimizing over a subset of variables. 
More precisely, we optimize over the variable $Q$ and the variables $(T,J,R)$ alternatively:  
\begin{itemize}

\item for $(T,J,R)$ fixed, the optimal solution for $Q$ is given by solving an unconstrained least squares problem: 
\[
\min_Q {\| A - (J-R)Q\|}_F^2 + 
\mu {\|E - T Q \|}_F^2. 
\]

\item For $Q$ fixed, the optimal solution for $(T,J,R)$ requires to solve the following semidefinite program: 
\begin{eqnarray} \label{Qfixed}
\min_{T \succeq 0, J^\top=-J, R^\top=R}    
{\| A - (J-R)Q\|}_F^2 + 
\mu {\|E - TQ \|}_F^2   \\
\text{ such that } 
\quad (T,J,R) \text{ satisfies LMI from Table~\ref{tab:specialdhpairlmi}.} \nonumber
\end{eqnarray}

\end{itemize}

We have added an extrapolation step to this BCD scheme that allows one to accelerate convergence~\cite{phan2023inertial}.  

To initialize $(J,R,T,Q)$, we rely on the same strategy as in the previous section: set $Q= I_n$, and then obtain the optimal $(T,J,R)$ by solving~\eqref{Qfixed}.

\section{Numerical Experiments}

All experiments are run on a laptop 12th Gen Intel(R) Core(TM) i9-12900H (2.50 GHz), 32Go of RAM. The code is available from \url{https://gitlab.com/ngillis/nearest-omega-stable-pair}.  

To the best of our knowledge, there exists only a few previous works to compute nearest stable matrix pairs: 
\begin{itemize}
    \item The paper~\cite{r1} focus on the Hurwitz stability. We will refer to this approach as ``old DH'' as it is an older method based on another parametrization; see~\eqref{eq:Volker}. 

    \item The paper~\cite{GilKS20} focuses on the Schur stability,  under rather strict rank constraints, hence we do not consider it here. 

    \item Noferini and Nyman~\cite{noferini2025finding} rely on the generalized Schur form: given a matrix pair $(E,A)$, it is possible to compute (complex) orthogonal matrices $(Q,Z)$ such that $(E_t, A_t) = (Q E Z, Q A Z)$ is an upper triangular pair. 
The multiset of the finite and infinite eigenvalues (counted with algebraic
multiplicity) of a regular upper triangular pair $(E_t, A_t)$ is given by 
$\{-A_t(i,i)/E_t(i,i) \}_{i=1,\dots,n}$. Therefore, these entries can be restricted to a specified set. However, projection onto that set is not straightforward. Authors develop code for Hurwitz and Schur stability, optimizing over the set of orthogonal matrices using Manopt~\cite{manopt}; see~\url{https://github.com/NymanLauri/nearest-stable-pencil}. We will refer to this approach as ``Manopt''. 

\end{itemize}

This paper presents the first algorithm for computing nearest $\Omega$-stable pairs in contexts beyond Schur and Hurwitz, as detailed in Table~\ref{tab:specialdhpairlmi}. Such examples will be provided in Section~\ref{sec:neelam}; however, we will first compare our proposed approach to those of~\cite{r1} and~\cite{noferini2025finding}.  

We will refer to our approach as ``new DH''.

\subsection{Hurwitz stability} \label{sec:hurwitznumexp}

Let us start with Hurwitz stability where we use two types of matrices: 
\begin{itemize}
    \item Grcar~\cite{grcar1989operator}. The matrix $A$ is the Grcar matrix of order $k$ which is a banded Toeplitz
matrix with its subdiagonal set to $-1$ and both its main
and $k$ superdiagonals set to 1, while $E = I_n$.  
    All eigenvalues of $(E,A)$ have a positive real part. These matrices were used in~\cite{gillis2017computing, guglielmi2017matrix, noferini2025finding} for similar numerical experiments.  

    \item Mass-spring-damper (MSD)~\cite[Example 1.1]{MehMS16}. This is a specific descriptor dynamical system: 
    \begin{equation} \label{msdmat}
E =  \mat{cc} M & 0\\ 0  &  I_{n} \rix,~
A = (J-R) Q,~
J =   \mat{cc} 0 & -I_{n} \\ I_{n} & 0 \rix ,~
R =  \mat{cc} D & 0 \\ 0 & 0 \rix ,~
Q =  \mat{cc} I_{n} & 0 \\ 0 & K \rix, 
\end{equation} 
where $M \succ 0$ is the mass matrix. 
    To make it unstable, $R$ is replaced with $\mat{cc} D & 0 \\ 0 & -\epsilon I_n \rix$ for some parameter $\epsilon > 0$; the larger $\epsilon$, the more unstable is the system. 
    These matrices were used in~\cite{r1, noferini2025finding} for similar numerical experiments.  

\end{itemize}

Table~\ref{tab:hurwitz} reports the final relative error of the solutions produced by the various algorithms: given an approximation  $(\tilde E, \tilde A)$ for $(E,A)$, it is defined as 
\begin{equation} \label{eq:relerr}
    \text{ relative error } \; = \; 
\sqrt{
\frac{
\|A - \tilde{A}\|_F^2 + \|E - \tilde{E}\|_F^2
}
{
\|A \|_F^2 + \|E\|_F^2 
}
}  , 
\end{equation}
which should be between 0 and 1, as the trivial solution, $\tilde{A} = \tilde{E} = 0$, provides an error of 1. 
Each algorithm is given $3n$ seconds to run, which allows all of them to converge for these matrices; see Figures~\ref{fig:grcarHurwitz} and~\ref{fig:MSDHurwitz} in Appendix~\ref{app:figs}. 
 \begin{center}
\begin{table}[h!]
\begin{center}  
\begin{tabular}{|c||c|c|c|}
\hline 
  &   old DH~\eqref{eq:Volker} \cite{r1}  & Manopt \cite{noferini2025finding} & new DH~\eqref{eq:paramV2} \\ \hline 
Grcar ($n=10, k=1$) & 31.53   & \textbf{23.40}    & 31.53    \\
Grcar ($n=10, k=2$) & 26.98   & \textbf{21.57}    & 22.50    \\
Grcar ($n=10, k=3$) & 22.48   & \textbf{20.05}   & 20.87    \\
Grcar ($n=20, k=1$) & 30.87   & \textbf{22.77}   & 30.87    \\
Grcar ($n=20, k=2$) & 27.20   & \textbf{14.49}    & 23.42    \\
Grcar ($n=20, k=3$) & 23.56   & \textbf{13.26}    & 17.69    \\
Grcar ($n=30, k=1$) & 30.64   & \textbf{18.46}    & 30.64    \\
Grcar ($n=30, k=2$) & 27.93   & \textbf{11.65}    & 23.63    \\
Grcar ($n=30, k=3$) & 23.84   & \textbf{10.55}    & 19.00    \\
\hline 
MSD ($n=10, \epsilon=0.01$) & $2.4 \cdot 10^{-6}$   
& 0.056    
& $\mathbf{4.1 \cdot 10^{-8}}$    \\
MSD ($n=10, \epsilon=0.05$) & 2.59   & \textbf{1.45}    & \textbf{1.45}    \\
MSD ($n=10, \epsilon=0.10$) & 8.23   & \textbf{1.45}    & 2.06    \\
MSD ($n=20, \epsilon=0.01$) & 3.55   & 0.75    & \textbf{0.56}    \\
MSD ($n=20, \epsilon=0.05$) & 6.09   & \textbf{0.98}    & 1.15    \\
MSD ($n=20, \epsilon=0.10$) & 7.43   & \textbf{0.95}    & 1.41    \\
MSD ($n=30, \epsilon=0.01$) & 4.48   & 0.59    & \textbf{0.58}    \\
MSD ($n=30, \epsilon=0.05$) & 5.44   & \textbf{0.75}    & 0.93    \\
MSD ($n=30, \epsilon=0.10$) & 7.09   & \textbf{0.75}    & 1.13    \\\hline 
\end{tabular} 
\caption{Relative error for different algorithms for Hurwitz stability, with a time limit of $3n$ seconds. The best result is highlighted in bold. 
\label{tab:hurwitz}}   
\end{center}
\end{table}
\end{center} 
We observe the following: 
\begin{itemize}
    \item The formulation~\eqref{eq:Volker} (old DH) performs significantly worse than~\eqref{eq:paramHur} (new DH); this confirms our intuition that using the inverse of $Q$ as an optimization variables makes the problem harder to tackle.

    \item The Manopt-based approach from~\cite{noferini2025finding} performs on average better than new DH method; however, this is not consistently the case, particularly in systems nearing stability (MSD matrices with $\epsilon = 0.01$). The explanation is provided in~\cite{noferini2025finding}: when a system is far from being stable, its nearest stable approximation is likely to have large Jordan chains which are hard to numerically converge to, but Manopt avoids this pitfall by working with the Schur form directly.   

\item The computational costs of Manopt and the new DH are similar; see Figures~\ref{fig:grcarHurwitz} and~\ref{fig:MSDHurwitz} in Appendix~\ref{app:figs} that display the evolution of the relative error as a function of time. Both algorithms scale in $O(n^3)$ operations per iteration. 

    

\end{itemize}

\subsection{Schur stability} 

We now consider Schur stability, that is, all finite eigenvalues with modulus smaller than one. We use two types of matrices: 
\begin{itemize}
    \item Grcar~\cite{grcar1989operator} as for the Hurwitz case,  with $E=I_n$. Note that these matrices are far from being Schur stable. 

    \item Near-Schur stable pairs: we generate $A$ as a $n$-by-$n$ random orthogonal matrix, taking the $Q$ factor in a QR factorization of a random Gaussian matrix. Then we generate a random Gaussian $N$, and set $A = A + \epsilon N/\|N\|_F \sqrt{n}$ where $\sqrt{n} = \|A\|_F$. We take $E = I_n$. 

\end{itemize}
Since $E = I_n$ in both cases, we can use the method from Choudhary et al.~\cite{ChouGS24}, which we refer to as Choudhary, to find the nearest Schur stable matrix $\tilde A$ from $A$, while $E$ is not touched. 

Table~\ref{tab:schur} reports the relative error of the solutions generated by the different algorithms. 
Each algorithm is given up to $1000$ seconds to run; 
 see Figures~\ref{fig:grcarSchur} and~\ref{fig:nearSchur} in Appendix~\ref{app:figsSchur} showing the evolution of the relative errors over time.  
 \begin{center}
\begin{table}[h!]
\begin{center}  
\begin{tabular}{|c||c|c|c|}
\hline 
  &   Choudary~\cite{ChouGS24}  & Manopt \cite{noferini2025finding} & new DH~\eqref{eq:paramV2} \\ \hline 
Grcar ($n=10, k=1$) & 38.27    & \textbf{23.40}   & 27.06  \\
Grcar ($n=10, k=2$) & 39.40    & \textbf{20.69}   & 24.19  \\
Grcar ($n=10, k=3$) & 40.85    & \textbf{18.29}   & 20.19  \\
Grcar ($n=20, k=1$) & 39.24    & \textbf{16.10}   & 27.75  \\
Grcar ($n=20, k=2$) & 44.06    & \textbf{14.49}   & 21.83  \\
Grcar ($n=20, k=3$) & 48.90    & \textbf{12.78}   & 20.98  \\
Grcar ($n=30, k=1$) & 39.55    & \textbf{18.46}   & 27.97  \\
Grcar ($n=30, k=2$) & 45.42    & \textbf{11.63}   & 23.47  \\
Grcar ($n=30, k=3$) & 51.21    & \textbf{10.51}   & 23.48  \\ 
\hline  
Near-Schur ($n=10, \epsilon=0.01$) & 0.07   & \textbf{0.03}   & 0.05  \\
Near-Schur ($n=10, \epsilon=0.10$) & 0.59   & \textbf{0.34}   & 0.35  \\
Near-Schur ($n=10, \epsilon=1.00$) & 9.96   & \textbf{4.32}   & 5.58  \\
Near-Schur ($n=20, \epsilon=0.01$) & 0.13   & \textbf{0.04}   & 0.08  \\
Near-Schur ($n=20, \epsilon=0.10$) & 1.04   & \textbf{0.50}   & 0.65  \\
Near-Schur ($n=20, \epsilon=1.00$) & 8.27   & \textbf{2.88}   & 3.69  \\
Near-Schur ($n=30, \epsilon=0.01$) & 0.08   & \textbf{0.04}   & 0.06  \\
Near-Schur ($n=30, \epsilon=0.10$) & 0.81   & \textbf{0.38}   & 0.39  \\
Near-Schur ($n=30, \epsilon=1.00$) & 12.59   & \textbf{3.15}   & 5.49  \\
\hline 
\end{tabular} 
\caption{Relative error for different algorithms for Schur stability, with a time limit of $10n$ seconds. The best result is highlighted in bold.  
\label{tab:schur}}   
\end{center}
\end{table}
\end{center} 
We observe the following: 
\begin{itemize}
    \item Since the approach from~\cite{ChouGS24} only modifies $A$, it is expected that it performs worse than the other two, as there are less degrees of freedom to approximate the pair $(E,A)$. 

    \item The Manopt-based approach from~\cite{noferini2025finding} performs better than new DH, although new DH  performs similarly, in terms of final relative errors, when the given system is closer to a stable one, that is, for near-Schur matrix pairs (in 5 cases, the relative errors are less than 0.05\% apart, when $\epsilon \leq 0.1$). 
    This is the same reason as for the Hurwitz case.

\item Manopt is significantly faster than new DH and Choudhary; see Figures~\ref{fig:grcarSchur} and~\ref{fig:nearSchur} in Appendix~\ref{app:figsSchur} that display the evolution of the relative error as a function of time. 
The reason is that new DH and Choudhary need to solve, at each iteration, semidefinite programs with $O(n^2)$ variables, which scales roughly as $O(n^6)$ operations per iteration, 
as opposed to Manopt that scales in $O(n^3)$ operations per iteration. A direction of further study is to design faster algorithms to tackle~\eqref{eq:paramV2}. 

    

\end{itemize}

\subsection{Other sets $\Omega$: examples from~\cite{ChouGS24}} \label{sec:neelam} 

We now present two examples form~\cite{ChouGS24} with other sets $\Omega$. To the best of our knowledge, our algorithm is the only one in the literature able to solve the nearest $\Omega$ stable pair in such cases. 

The code in~\cite{ChouGS24} allows one to generate  randomly $n$-by-$n$ matrices $A$ with eigenvalues in a predefined set, and then add some Gaussian noise controlled by the parameter $\epsilon$. We set $E= I_n$, so that the eigenvalues of $(E,A)$ coincide with that of $A$.

\begin{example}

Let us consider $\Omega$ as the intersection of 
\begin{itemize}
    \item a vertical strip between -5 and 5, 
    \item a horizontal strip between -3 and 3,  
    \item  a left parabolic region centered at $(6,0)$ and curvature 1, and 
    \item  a right parabolic region centered at $(-6,0)$ and curvature 1.   
\end{itemize} 

As in~\cite{ChouGS24}, we use\footnote{We also use the random seed 2017 as in~\cite{ChouGS24} to make the experiments reproducible.} $n=10$ and $\epsilon = 1$. 
The algorithm of~\cite{ChouGS24} that imposes $\tilde E = I_n$ provides an $\Omega$-stable matrix $\tilde A$ with relative error  $\frac{\|A - \tilde{A} \|_F}{\|A\|_F} = 18.1\%$, with $\|A-\tilde A\|_F^2 = 14.05$. 
Our algorithm approximates $(E,A)$ with $(\bar E, \bar A)$
with a relative error~\eqref{eq:relerr} of $4.3\%$, 
with $\|A-\bar A\|_F^2 + \| E- \bar E\|_F^2 = 0.81$. 
This shows that allowing to modify $E$ can reduce the approximation error significantly. 
Figure~\ref{fig:ex1} illustrates the set $\Omega$ along with the eigenvalues of these decompositions.  
\begin{figure}[ht!]
\begin{center}
\includegraphics[width=0.6\textwidth]{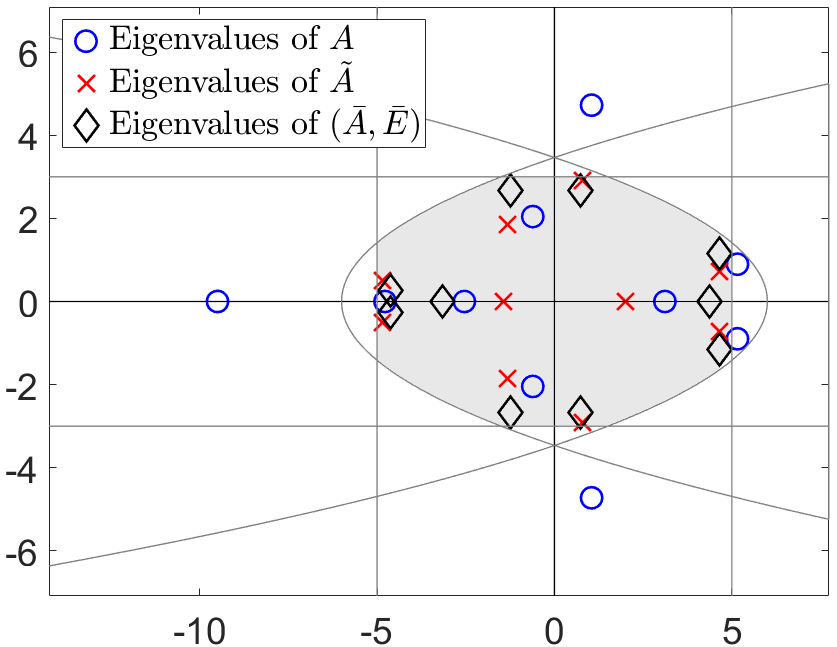}
\caption{Eigenvalues of
$A$, of its $\Omega$-stable approximation $\tilde{A} = 
(J- R) Q$~\cite{ChouGS24}, and the $\Omega$-stable matrix pair  $(\bar E, \bar A)$ that approximates $(I_n,A)$ using our proposed algorithm.  
The set $\Omega$ is the intersection of a vertical strip, a horizontal strip,  and left and right parabolic regions. \label{fig:ex1}}
\end{center}
\end{figure}
\end{example}

\begin{example}

Let us consider $\Omega$ as the intersection of 
\begin{itemize}
    \item an ellipsoid centered at 
    $(-1,0)$ with horizontal radius 3 and vertical radius of 2, 
 
    \item  a left hyperbolic region centered with semi-major axis $a_h = b_h = 0.5$, and 
    
    \item  a right conic sector centered at $(-3.5,0)$ with angle $\frac{3}{8} \pi$.   
\end{itemize}
As for the previous example, we generate a matrix with $n=10$ and $\epsilon = 1$, as in~\cite{ChouGS24}. 
The algorithm of~\cite{ChouGS24} that imposes $\tilde E = I_n$ provides a solution with relative error $\frac{\|A - \tilde{A} \|_F}{\|A\|_F} = 24.1\%$, with \mbox{$\|A-\tilde A\|_F^2 = 11.70$}. 
Our code approximates $(E,A)$ with $(\bar E, \bar A)$
with a relative error~\eqref{eq:relerr} of $14.2\%$, 
with $\|A-\bar A\|_F^2 + \| E- \bar E\|_F^2 = 4.26$. 
This shows, on a second example, that allowing to modify $E$ can reduce the approximation error significantly. 
Figure~\ref{fig:ex2} illustrates the set $\Omega$ along with the eigenvalues of these decompositions. 
\begin{figure}[ht!] 
\begin{center}
\includegraphics[width=0.6\textwidth]{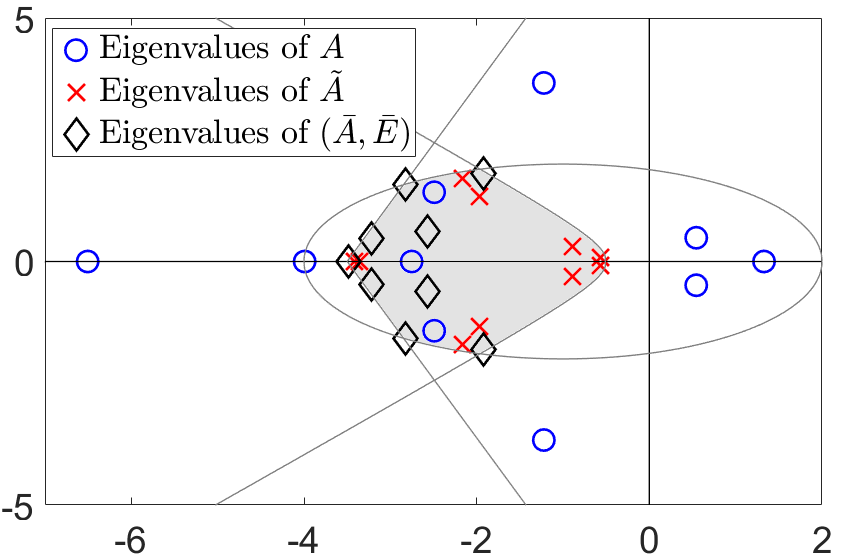}
\caption{Eigenvalues of
$A$, of its $\Omega$-stable approximation $\tilde{A} = 
(J- R) Q$~\cite{ChouGS24}, and the $\Omega$-stable matrix pair  $(\bar E, \bar A)$ that approximates $(I_n,A)$ using our proposed algorithm. 
The set $\Omega$ is the intersection of an ellipsoid, a left hyperbolic region, and a right conic sector.  \label{fig:ex2}}
\end{center}
\end{figure}

\end{example}

\section{Conclusion} 

In this paper, we have proposed a new characterization of $\Omega$-admissible matrix pairs using dissipative Hamiltonian (DH) forms. This allowed us to design a new formulation~\eqref{eq:paramV2} and algorithms to find the nearest $\Omega$-admissible matrix pair. 
The algorithm presented in~\cite{noferini2025finding} has superior average performance compared to our DH-based approach, particularly in systems that exhibit instability and those characterized by extensive Jordan chains.
Also, the algorithm from~\cite{noferini2025finding} is significantly faster on non-Hurwitz systems where our algorithm relies on interior-point methods. The development of faster algorithms, such as first-order methods, for~\eqref{eq:paramV2} remains an area for further investigation. 
However, our algorithm can sometimes provide better or comparable solutions. Furthermore, our algorithm is the first to be able to handle the nearest matrix pair problem for sets $\Omega$ beyond Hurwitz and Schur stability, specifically for LMI regions as detailed  in Table~\ref{tab:specialdhpairlmi} and illustrated in Section~\ref{sec:neelam}.

\small
\bibliographystyle{spmpsci}
\bibliography{ref}

\newpage 

\normalsize 
       
\appendix

\section{Evolution of the relative error for Hurwitz stability} \label{app:figs}

\begin{figure}[ht!]
\begin{center}
\includegraphics[width=\textwidth]{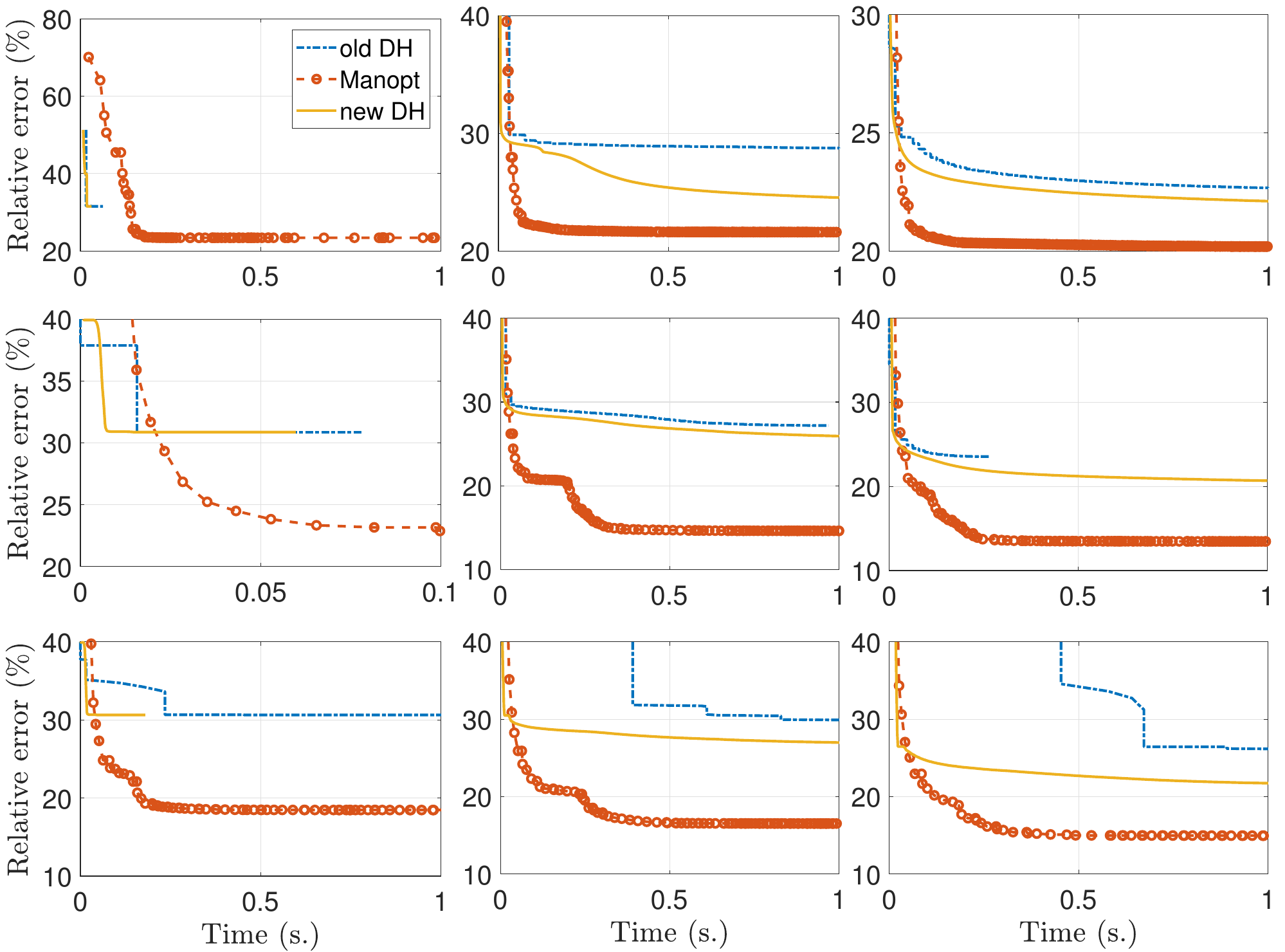}
\caption{Evolution of the relative error of the three algorithms for Grcar matrix pairs in case of Hurwitz stability: from top row to bottom row, $n=10,20,30$; from left to right, $k=1,2,3$. \label{fig:grcarHurwitz}} 
\end{center}
\end{figure}

\newpage 

\begin{figure}[ht!]
\begin{center}
\includegraphics[width=\textwidth]{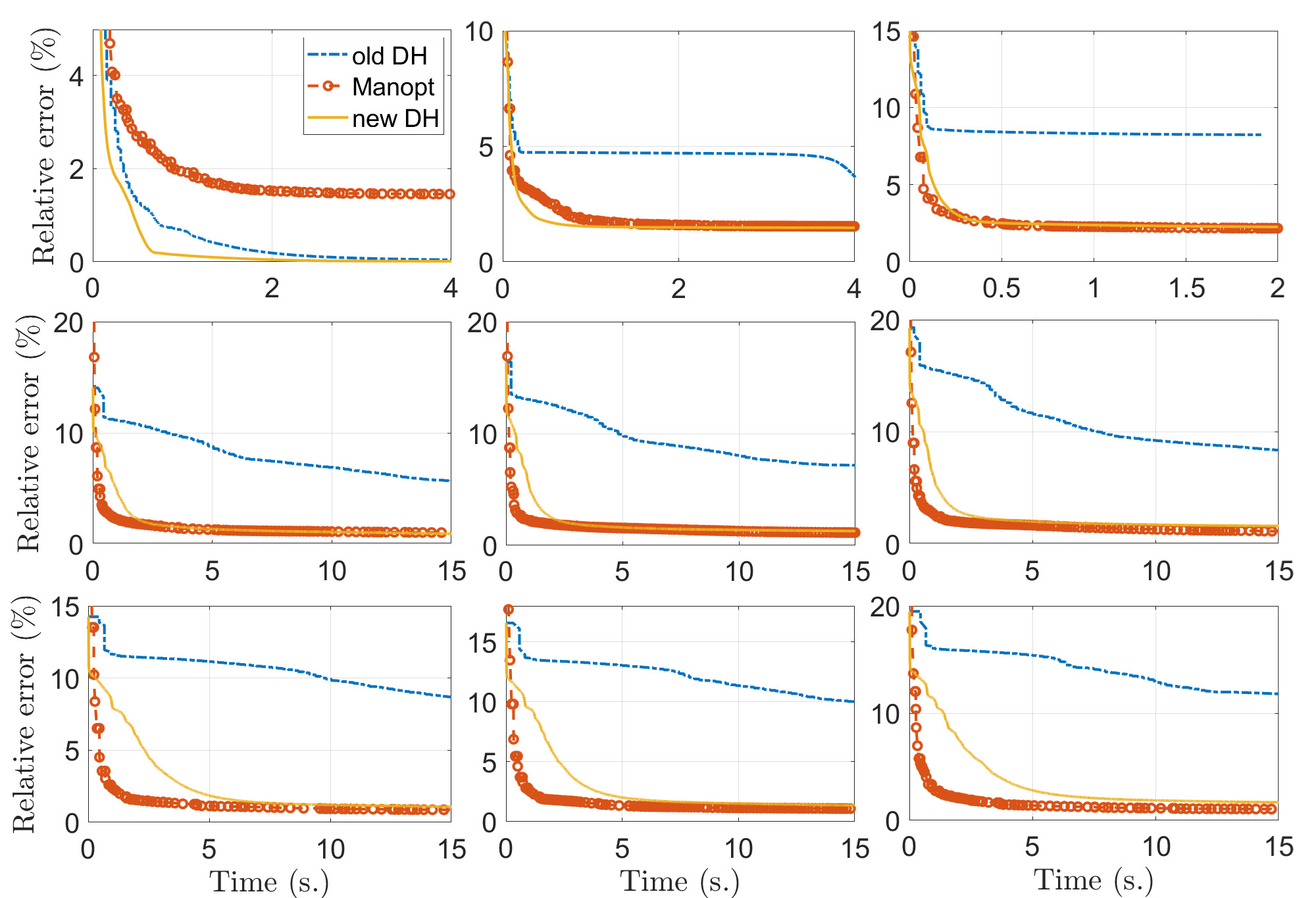}
\caption{Evolution of the relative error of the three algorithms for MSD  systems in case of Hurwitz stability: from top row to bottom row, $n=10,20,30$; from left to right, $\epsilon=0.01,0.05,0.1$. \label{fig:MSDHurwitz}}
\end{center}
\end{figure}

\newpage 

\section{Evolution of the relative error for Schur stability} \label{app:figsSchur}

\begin{figure}[ht!]
\begin{center}
\includegraphics[width=\textwidth]{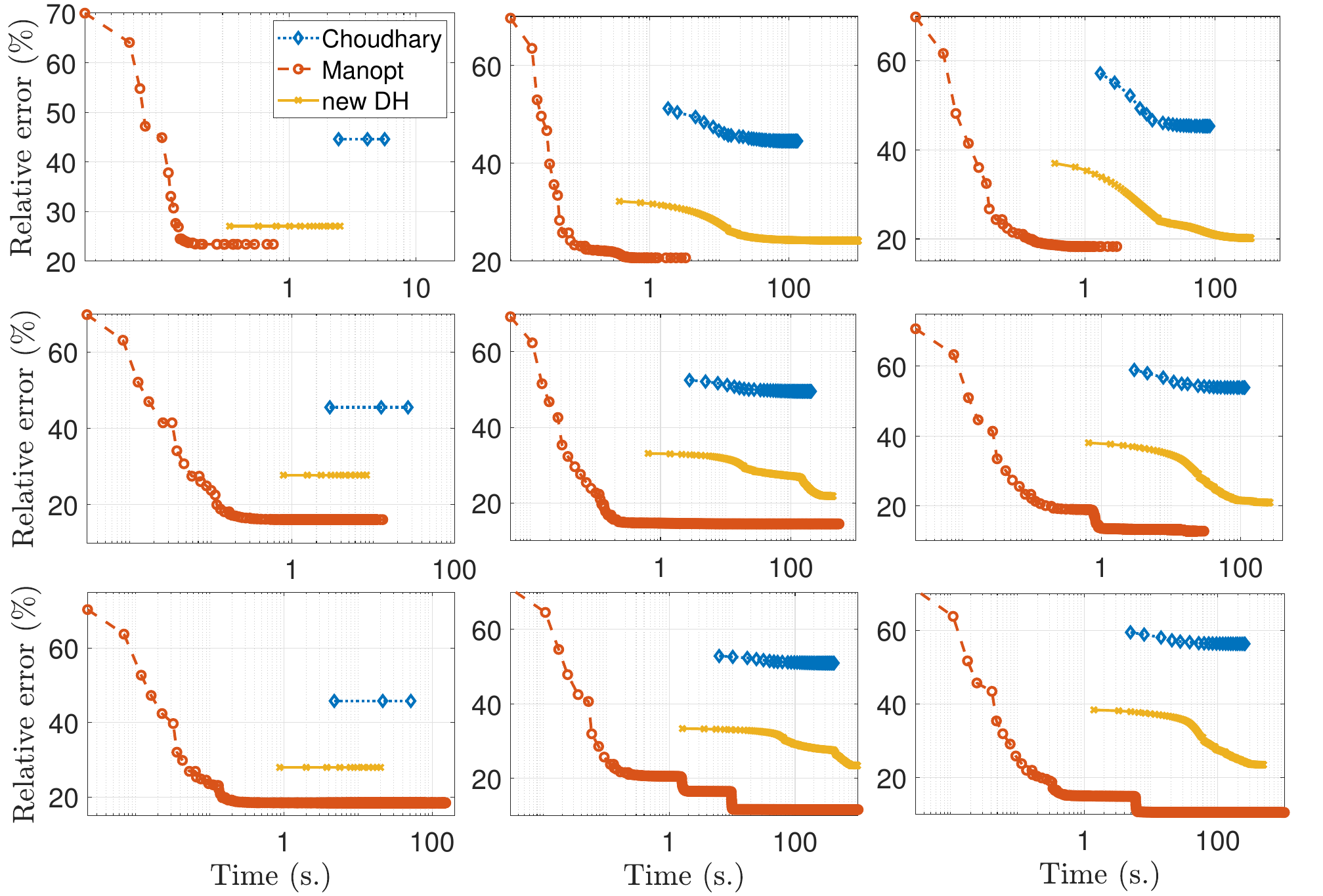}
\caption{Evolution of the relative error of the three algorithms for Grcar matrix pairs in case of Schur stability: from top row to bottom row, $n=10,20,30$; from left to right, $k=1,2,3$. \label{fig:grcarSchur}} 
\end{center}
\end{figure}

\begin{figure}[ht!]
\begin{center}
\includegraphics[width=\textwidth]{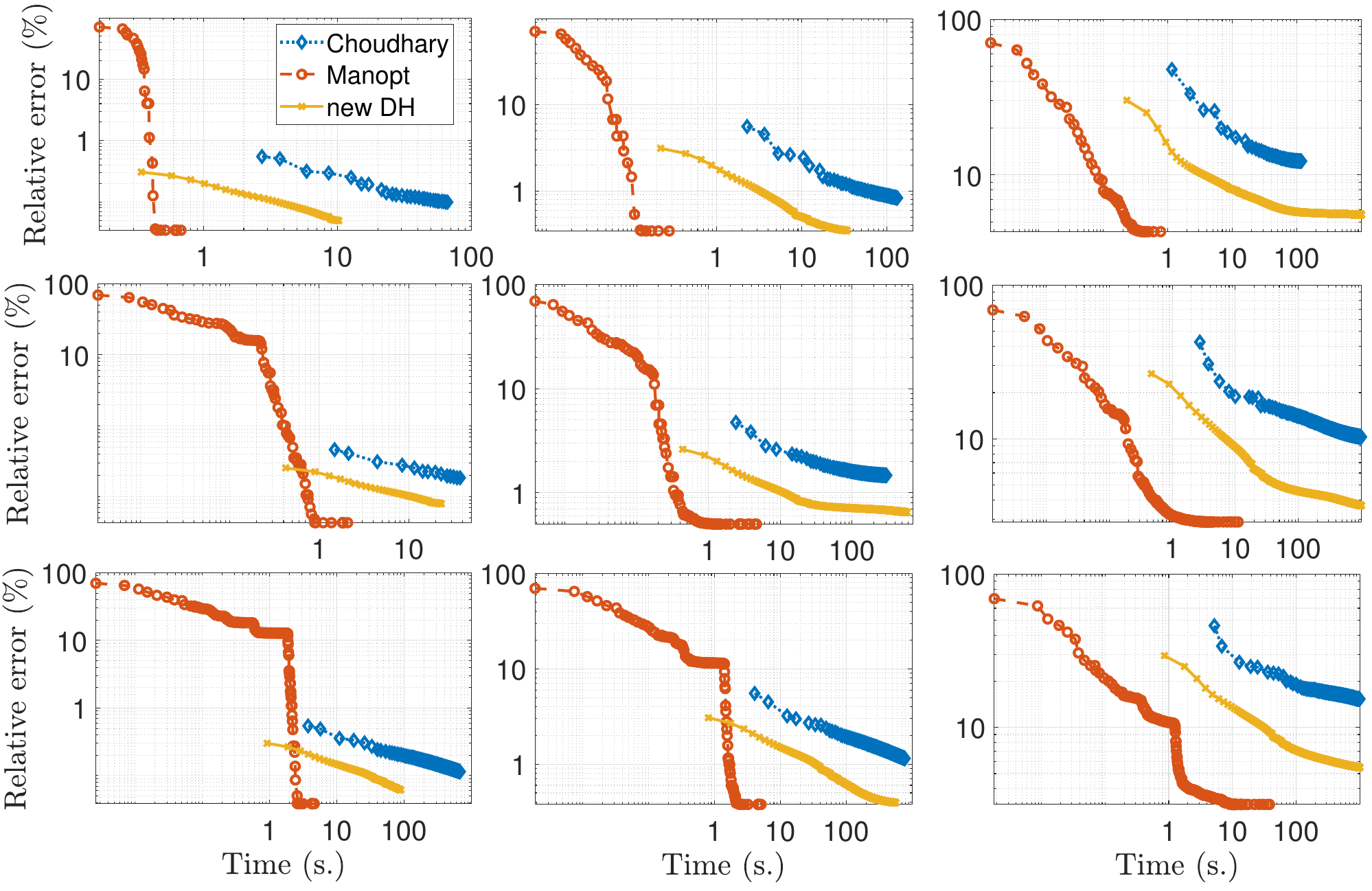}
\caption{Evolution of the relative error of the three algorithms for near-Schur matrix pairs: from top row to bottom row, $n=10,20,30$; from left to right, $\epsilon=0.01,0.1,1$. \label{fig:nearSchur}}
\end{center}
\end{figure}

\end{document}